\newtheorem{lemma}{Lemma}[section]
\newtheorem{them}[lemma]{Theorem}
\newtheorem{lm}[lemma]{Lemma}
\newtheorem{Conjecture}{Conjecture}
\begin{document}

\title{{A simple arithmetic criterion for graphs being determined by their generalized spectra}\footnote{This work is supported by the National Natural Science Foundation of China (No. 11471005)}}

\author{\qquad \small Wei Wang\footnote{The corresponding author. E-mail address:wang$\_$weiw@163.com.} \\
\small School of Mathematics and Statistics, Xi'an Jiaotong University,\\
\small No. 28 Xianning West Rd., Xi'an, Shaanxi, P.R. China, 710049}
\date{}
 \maketitle

\abstract A graph $G$ is said to be determined by its generalized spectrum (DGS for short) if
for any graph $H$, $H$ and $G$ are cospectral with cospectral complements implies that
$H$ is isomorphic to $G$.

It turns out that whether a graph $G$ is DGS is closely related to the arithmetic
properties of its walk-matrix. More precisely, let $A$ be the adjacency matrix of a graph $G$, and let $W =[e, Ae, A^2e,\cdots,A^{n-1}e]$ ($e$ is the all-one vector) be its \textit{walk-matrix}. Denote by $\mathcal{G}_n$ the set
of all graphs on $n$ vertices with $\det(W)\neq 0$. In [Wang, Generalized spectral characterization of graphs revisited, The Electronic J. Combin., 20 (4),(2013), $\#P_4$], the author defined a large family of graphs
$$\mathcal{F}_n = \{G \in{\mathcal{G}_n}|~\frac{\det(W)}{2^{\lfloor\frac{n}{2}\rfloor}}~\mbox{is~square-free~and}~2^{n/2+1}\not|\det(W)\}$$
(which may have positive density among all graphs, as suggested by some numerical
experiments) and conjectured every graph in $\mathcal{F}_n$ is DGS.

 In this paper, we show that the conjecture is actually true, thereby giving a simple arithmetic condition for determining whether a graph is DGS.\\
  {\small\bf AMS classification:~05C50}\\
 {\small\bf Keywords:}~{\small
Spectra of graphs; Cospectral graphs; Determined by spectrum.}

\section{Introduction}

The spectra of graphs encodes a lot of combinatorial information about the given graphs, and thus has long been a useful tool in dealing with various problems in Graph Theory, even if they have nothing to do with graph spectra in the appearance.

A fundamental problem in the theory of graph spectra is: ``What kinds of graphs are determined by the spectrum (DS for short)?" The problem dates back to more than 50 years ago and originates from Chemistry, which has received a lot of attention from researchers in recent years.

It was commonly believed that every graph is DS until the first counterexample was found by Collatz and Sinogowitz \cite{CS} in 1957. Since then, various constructions of cospectral graphs (i.e., graphs having the same spectrum) have been studied extensively and a lot of results are presented in literature. For example, Godsil and McKay \cite{GM} invented a powerful method call \textit{GM-switching}, which can produce lots of pairs of cospectral graphs (with cospectral complements). An even more striking result was given by Schwenk~\cite{Sch}, stating that almost all trees are not DS.

However, less results are known about DS graphs, and it turns out that proving graphs to be DS is much more difficult than constructing cospectral graphs. Up to now, all the known DS graphs have very special properties, and the techniques (e.g., the eigenvalue interlacing technique) involved in proving them to be DS depend heavily on some special properties of the spectra of these graphs, and cannot be applied to general graphs. For the background and some known results about this problem, we refer the reader to \cite{DH,DH1} and the references therein.

The above problem clearly depends on the spectrum concerned. In \cite{WX,WX1}, Wang and Xu gave a method for determining whether a graph $G$ is determined
by its generalized spectrum (DGS for short, see Section 2 for details), which works for a large family of general graphs. The key observation is as follows:

Let $G$ and $H$ be two graphs that are cospectral with cospectral complements. Then there exists an orthogonal matrix $Q$ with $Qe=e$ ($e$ is the all-one matrix) such that $Q^TA(G)Q=A(H)$, where $A(G)$ and $A(H)$ are the adjacency matrices of $G$ and $H$, respectively. Moreover, the $Q$ can be chosen to be a rational matrix (under mild restrictions). Thus, if we can show that every rational orthogonal matrix $Q$ with $Qe=e$ such that $Q^TA(G)Q$　is a $(0,1)$-matrix with zero diagonal must be a permutation matrix, then $G$ is clearly DGS. This seems, at first glance, as difficult as the original problem. However, the authors managed to find some algorithmic methods to achieve this goal, by using some arithmetic properties of the walk-matrix associated with the given graph.

In Wang~\cite{WWW}, the author continued this line of research by showing that the DGS-property of a graph $G$ is actually closely related to whether the determinant of the walk-matrix $\det(W)$ is square-free~(for odd primes). More precisely, the author defined a large family of graphs $\mathcal{F}_n$ (see Section 2 for details) that consists of graphs $G$ with $\frac{\det(W)}{2^{\lfloor\frac{n}{2}\rfloor}}$~(this is always an integer; see Section 3) being an odd square-free integer. Then he was able to show that for any graph $G\in{\mathcal{F}_n}$, if $Q$ is a rational orthogonal matrices $Q$ with $Qe=e$ such that $Q^TA(G)Q$　~is a $(0,1)$-matrix with zero diagonal, then $2Q$ must be an integral matrix, and further proposed the following conjecture:

\begin{Conjecture} [Wang~\cite{Wang,WWW}]Every graph in $\mathcal{F}_n$ is DGS.
\end{Conjecture}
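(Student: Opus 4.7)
The plan is to build on the partial result of \cite{WWW}, which shows that whenever $G\in\mathcal{F}_n$ and $Q$ is a rational orthogonal matrix with $Qe=e$ for which $Q^T A(G) Q$ is a $(0,1)$-matrix with zero diagonal, then $\tilde{Q}:=2Q\in M_n(\mathbb{Z})$. To prove the conjecture, it suffices to establish that every such $Q$ must in fact be a permutation matrix; then $H := Q^T A(G) Q$ is the adjacency matrix of a graph isomorphic to $G$, and via \cite{WX,WX1} the existence of such $Q$ is precisely what the generalized-spectrum hypothesis provides. I intend to carry out this upgrade by a careful $2$-adic analysis of $\tilde{Q}$ against the walk matrix $W:=W(G)$.

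The first step is a classification of the rows of $\tilde{Q}$. From $\tilde{Q}^T\tilde{Q}=4I$ and $\tilde{Q}e=2e$, each row of $\tilde{Q}$ is an integer vector whose entries sum to $2$ and whose sum of squares equals $4$. An elementary enumeration shows such a vector is either of \emph{type (I)} (one entry $+2$, the rest zero) or of \emph{type (II)} (three entries $+1$, one entry $-1$, the rest zero). Orthogonality of distinct rows shows that a type-(I) row whose $+2$ lies in column $j$ forces every other entry of column $j$ to be zero, so column $j$ is itself of type (I); symmetrically, the support of any type-(II) row lies entirely in type-(II) columns. After separately relabeling the vertices of $G$ and of $H$ — which amounts to independent row- and column-permutations of $\tilde{Q}$ and is harmless for proving $G\cong H$ — the matrix $\tilde{Q}$ acquires the block-diagonal form
\[
\tilde{Q}=\begin{pmatrix} 2I_r & 0\\ 0 & C\end{pmatrix},
\]
in which $C$ is an $(n-r)\times(n-r)$ $\{0,\pm 1\}$-matrix every row and column of which is of type (II), satisfying $C^T C=4I$ and $Ce=2e$. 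Once I establish $n-r=0$, the proof is complete.

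To rule out a non-trivial block $C$, I would combine the walk-matrix identity $\tilde{Q}^T W=2\,W(H)$ (which follows from $Q^T e=e$ and $A(H)^k e=Q^T A(G)^k e$) with the matrix equations coming from $\tilde{Q}^T A(G)\tilde{Q}=4A(H)$. In block form these yield
\[
C^T W_2 = 2\,W(H)_{I_2},\qquad A_{12}C = 2A(H)_{12},\qquad C^T A_{22}C = 4A(H)_{22},
\]
where $W_2$ and $A_{22}$ are the restrictions of $W$ and $A(G)$ to the type-(II) block $I_2$ and $A_{12}$ is the off-diagonal block. Reducing the first equation modulo $2$, each row of $C\bmod 2$ is the indicator $\mathbf{1}_S$ of a $4$-element subset $S\subseteq I_2$, and $\mathbf{1}_S$ must lie in the left kernel of $W\bmod 2$.

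The main obstacle is then to show that such $4$-element kernel subsets, together with the adjacency constraints imposed by $A_{12}C\equiv 0\pmod 2$ and $C^T A_{22}C\equiv 0\pmod 4$, cannot coexist with the hypothesis $G\in\mathcal{F}_n$, i.e.\ with $\det(W)/2^{\lfloor n/2\rfloor}$ being odd and square-free. Type-(II) rows correspond to Godsil--McKay-style switchings on $4$-vertex subsets, and the conjecture essentially asserts that no such switching can preserve the generalized spectrum when $G\in\mathcal{F}_n$. My plan is to exploit the fact that the odd-square-freeness hypothesis forces all invariant factors of $W$ except possibly the last to be powers of $2$, rigidly constraining the Smith normal form of $W$ at the prime $2$. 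From this rigidity, together with the three block equations above, I would extract from any single type-(II) row of $\tilde{Q}$ an extra factor of $2$ dividing $\det(W)$ beyond the allowed $\lfloor n/2\rfloor$, contradicting $G\in\mathcal{F}_n$. I expect this $2$-adic extraction to be the technical heart of the argument.
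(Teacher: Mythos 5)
Your reduction is sound as far as it goes: from the prior result $\ell Q=2Q\in M_n(\mathbb{Z})$, the classification of rows of $2Q$ into type (I) (a single entry $2$) and type (II) (three entries $1$ and one entry $-1$), and the resulting block decomposition $2Q=\mathrm{diag}(2I_r,C)$, are all correct and standard consequences of $(2Q)^T(2Q)=4I$ and $(2Q)e=2e$. But the proof stops exactly where the difficulty begins. Everything after ``The main obstacle is then to show\dots'' is a statement of intent, not an argument: you never actually derive a contradiction from the existence of a type-(II) row, and the mechanism you gesture at (``extract an extra factor of $2$ dividing $\det(W)$ beyond the allowed $\lfloor n/2\rfloor$'') is not substantiated. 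Moreover, the block equations you propose to work with are too weak. The equations $C^TW_2=2W(H)_{I_2}$ and $A_{12}C=2A(H)_{12}$ are linear mod $2$ and only tell you that each column $u$ of $C$ satisfies $W^Tu\equiv 0\pmod 2$; the single quadratic equation $C^TA_{22}C=4A(H)_{22}$ gives $u^TAu\equiv 0\pmod 4$ for $k=1$ only. The paper's contradiction needs the full family $u^TA^ku\equiv 0\pmod 4$ for \emph{all} $k=0,1,\dots,n-1$, which comes from $Q^TA^kQ=B^k$, and your sketch never invokes these higher powers.

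Concretely, the missing content is the following chain. First, one shows (Lemmas \ref{Le1}--\ref{MN} and \ref{NBA}) that for $G\in\mathcal{F}_n$ the mod-$2$ kernel of $W^T$ is spanned by the first $\lfloor n/2\rfloor$ columns of $W$ \emph{itself} (omitting $e$ when $n$ is odd), so $u\equiv\tilde{W}v\pmod 2$ for some $v\not\equiv 0$. Second, substituting this into $u^TA^lu\equiv 0\pmod 4$ and using that every entry $e^TA^me$ of $\tilde W^TA^l\tilde W$ is even, the off-diagonal terms pair up and the quadratic congruences collapse to the \emph{linear} system $\tfrac{1}{2}W^T\tilde{W}_1\,v\equiv 0\pmod 2$ with $\tilde W_1=[A^{2i_1}e,\dots,A^{2i_k}e]$. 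Third, one proves (Lemma \ref{core}) that $\tfrac{1}{2}W^T\tilde{W}_1$ has full column rank over $\mathbf{F}_2$, precisely because $\det(W^TW)=2^nb^2$ with $b$ odd when $G\in\mathcal{F}_n$; this forces $v\equiv 0\pmod 2$, a contradiction. This is where the hypothesis $2^{n/2+1}\nmid\det(W)$ actually enters, and none of these three steps is present in your proposal. As written, the proposal establishes only the (already known) structure of $2Q$ and correctly identifies the target, but does not prove the conjecture.
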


The main objective of this paper is to show that the above conjecture is actually true. Thus we have the following theorem.
\begin{them}\label{Main} Conjecture 1 is true.
\end{them}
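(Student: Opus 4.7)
The plan is to combine the integrality result established in \cite{WWW} with a modular analysis of the walk matrix in order to pin down the orthogonal matrix realising the cospectrality. Suppose $H$ is generalized-cospectral to $G\in\mathcal{F}_n$. By the standard Wang--Xu reduction there exists a rational orthogonal $Q$ with $Qe=e$ and $Q^TA(G)Q=A(H)$, and by the main theorem of \cite{WWW} the matrix $P:=2Q$ is integral. The conjecture is thereby reduced to the following arithmetic-combinatorial statement: any integer matrix $P$ satisfying $PP^T=4I$, $Pe=2e$, and $\tfrac14 P^T A(G) P\in\{0,1\}^{n\times n}$ with zero diagonal must equal $2P_\sigma$ for some permutation $\sigma$.

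The first step is to classify the rows of $P$. The constraints $\|\mathrm{row}_i P\|^2=4$ and $(\mathrm{row}_i P)\cdot e=2$ leave only two possibilities: \emph{trivial} rows, equal to $2e_j^T$ for some $j$, and \emph{switching} rows, with four nonzero entries consisting of three $+1$'s and one $-1$. An elementary orthogonality argument forces trivial rows to pair with trivial columns, so $P$ decomposes (after simultaneous row-column permutation) as a block-diagonal matrix $2P_\sigma\oplus P'$, where $P_\sigma$ is a permutation of the trivial indices and $P'$ is a pure switching block of size $k\times k$. A further analysis using $P'^T P'=4I_k$ together with the $(0,1)$ and zero-diagonal constraints on $\tfrac14 P'^T A_C P'$ (where $A_C$ is the corresponding diagonal block of $A(G)$) should show that $P'$ splits further into $4\times 4$ elementary blocks equivalent to $J_4-2I_4$. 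Put differently, every nontrivial $Q$ realises a composition of $4$-vertex Godsil--McKay switchings, and the conjecture reduces to excluding any single such switching block when $G\in\mathcal{F}_n$.

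The main step---and the main obstacle---is to derive a contradiction from the presence of even a single $4\times 4$ switching block under the $\mathcal{F}_n$-hypothesis. From $W(H)=Q^TW(G)$ and $P=2Q$ the identity $P^T W(G)=2W(H)\in 2\mathbb{Z}^{n\times n}$ reduces modulo $2$ to $\bar P^T\bar W(G)\equiv 0\pmod 2$, forcing the columns of $\bar W(G)$ inside $\ker(\bar P^T)$ over $\mathbb{F}_2$. The hypothesis $G\in\mathcal{F}_n$---$\det(W)/2^{\lfloor n/2\rfloor}$ odd and square-free, with $\nu_2(\det W)=\lfloor n/2\rfloor$---fixes the Smith normal form of $W$ very tightly: $\mathrm{rank}_{\mathbb{F}_p}(W)=n-1$ at each odd prime $p\mid\det W$, and the $2$-part of the Smith form is also determined. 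The plan is to show that the presence of a single $4\times 4$ switching block in $P$ forces either an extra factor of $2$ in $\det W$ or a repeated odd prime divisor of $\det W$, either way contradicting the hypothesis. The technical heart of the argument lies in extracting this precise arithmetic contradiction; it will require tracking how the switching set's neighbourhood in $G$ constrains $P^T W(G)$ modulo $4$ (not just modulo $2$), and reconciling the combinatorial shape of a GM-switching with the rigid Smith structure of $W$ forced by square-freeness. A promising route is to work $p$-adically: for odd primes $p\mid\det W$, use $\mathrm{rank}_{\mathbb{F}_p}(W)=n-1$ to show that $Q$ acts as the identity on $W(G)\mathbb{Z}^n$ modulo $p$, and then handle the prime $2$ by combining the structural description of $P$ with the tightness $\nu_2(\det W)=\lfloor n/2\rfloor$ to preclude any nontrivial switching block.
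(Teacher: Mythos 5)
Your reduction to the level-$2$ case is sound and matches the paper's setup: Theorems \ref{L0} and \ref{FF} together with Lemma \ref{PO} do give $\ell\mid 2$, so $P=2Q$ is integral, and your classification of the rows of $P$ (either $2e_j^T$ or three $+1$'s and one $-1$) is correct. But the proof stops exactly where the theorem begins. The entire content of Theorem \ref{Main} beyond what is already in \cite{WWW} is the exclusion of $\ell=2$, and for that step you offer only a plan (``the plan is to show\ldots'', ``a promising route is\ldots''), not an argument. In particular, the claim that a single $4\times4$ switching block forces ``an extra factor of $2$ in $\det W$ or a repeated odd prime divisor'' is neither proved nor, as stated, the right target: the odd primes are already disposed of by Theorem \ref{FF}, and the paper does \emph{not} show that $\nu_2(\det W)>\lfloor n/2\rfloor$; it derives a contradiction by a finer mod-$4$ computation. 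Your mod-$2$ relation $\bar P^T\bar W\equiv 0$ is genuinely too weak: it only says the columns of $\ell Q$ lie in the kernel of $W^T$ over $\mathbb{F}_2$, which for $G\in\mathcal{F}_n$ is a $\lfloor n/2\rfloor$-dimensional space that is always nonzero, so no contradiction can come from mod $2$ alone.

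The missing idea is the quadratic congruence modulo $4$. From $Q^TA^kQ=B^k$ one gets, for any column $\bar u$ of $\ell Q$ with $\bar u\not\equiv 0\pmod 2$, that $\bar u^TA^k\bar u=\ell^2(B^k)_{ii}\equiv 0\pmod 4$ (Lemma \ref{NB}). Writing $u\equiv\tilde Wv\pmod 2$ with $\tilde W$ a set of fundamental solutions of $W^Tx\equiv 0\pmod 2$ drawn from the columns of $W$ itself (Lemmas \ref{MN} and \ref{NBA}), and using that every entry $e^TA^je$ of $\tilde W^TA^l\tilde W$ is even (Lemma \ref{Le1}), the congruence collapses to $\tfrac12W^T\tilde W_1\,v\equiv 0\pmod 2$, where $\tilde W_1=[A^{2i_1}e,\dots,A^{2i_k}e]$ has \emph{doubled} exponents. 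The hypothesis $G\in\mathcal{F}_n$ then enters a second time, through Lemma \ref{core}: $\tfrac12 W^T\tilde W_1$ has full column rank over $\mathbb{F}_2$, forcing $v\equiv0$ and hence $u\equiv 0\pmod 2$, a contradiction. None of this --- the mod-$4$ diagonal identity, the passage to $\tilde W_1$, or the rank computation for $\tfrac12W^T\tilde W_1$ --- appears in your proposal; and the structural decomposition of $P$ into $J_4-2I_4$ blocks (itself only asserted with ``should show'', not proved) plays no role in the actual proof. As written, the proposal is an outline with the decisive step missing.
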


The proof of above theorem is based on our previous work in \cite{WX,WWW}, and a new insight in dealing with the case $p=2$.

The paper is organized as follows: The next section, we review some previous results
that will be needed in the sequel. In Section 3, we present the proof of Theorem \ref{Main}. In Section 4, we give an extension of the Theorem 1.1. Conclusions and future work are given in Section 5.

\section{Preliminaries}
For convenience of the reader, in this section, we will briefly review some known results
from \cite{WX,WWW}.

Throughout, let $G=(V,E)$ be a simple graph with $(0,1)$-adjacency matrix $A=A(G)$.
The \textit{spectrum} of $G$ consists of all the eigenvalues
(together with their multiplicities) of the matrix $A(G)$. The
spectrum of $G$ together with that of its complement will be
referred to as \textit{the generalized spectrum} of $G$ in the
paper (for some notions and terminologies in graph spectra, see~\cite{CDS}).

For a given graph $G$, we say that $G$ is \textit{determined by
its spectrum} (DS for short), if any graph having the same
spectrum as $G$ is necessarily isomorphic to $G$. (Of course, the
spectrum concerned should be specified.)

The \textit{walk-matrix} of a graph $G$, denoted by $W(G)$ or simply $W$, is defined as $[e, Ae, A^2e,\cdots,A^{n-1}e]$ ($e$ denotes the all-one vector henceforth).
There is a well-known combinatorial interpretation of $W$, that is, the $(i,j)$-th entry of $W$ is the number of walks of $G$ starting from vertex $i$ with
length $j-1$. It
turns out that the arithmetic properties of $\det(W)$ is closely related to wether $G$ is DGS
or not, as we shall see later.

A graph $G$ is called \textit{controllable graph} if $W$ is non-singular (see also \cite{G}). Denote by $\mathcal{G}_n$ the set of all controllable graphs on $n$ vertices.
The following theorem lies at the heart of our discussions.
\begin{them}[\cite{WX}] Let $G\in{\mathcal{G}_n}$. Then there exists a graph $H$ that is
cospectral with $G$ w.r.t. the generalized spectrum if and only if there exists a rational
orthogonal matrix $Q$ such that $Q^TA(G)Q = A(H)$ and $Qe = e$.
\end{them}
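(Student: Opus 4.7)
The plan is to establish the equivalence by constructing, in the forward direction, an explicit rational orthogonal $Q$ directly from the walk matrices, and verifying the elementary facts needed in the backward direction. Throughout let $W(G)$ denote the walk matrix of $G$; by hypothesis $G\in\mathcal{G}_n$, so $W(G)$ is invertible.

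For the easy direction ($\Leftarrow$), suppose some rational orthogonal $Q$ satisfies $Qe=e$ and $Q^TA(G)Q=A(H)$. Orthogonal similarity immediately yields that $A(G)$ and $A(H)$ have the same spectrum. For the complements, orthogonality combined with $Qe=e$ forces $Q^Te=e$ as well, hence $Q^TJQ=(Q^Te)(e^TQ)=ee^T=J$, where $J=ee^T$. Writing $A(\bar G)=J-I-A(G)$ and conjugating by $Q$ then produces $Q^TA(\bar G)Q=J-I-A(H)=A(\bar H)$, so $\bar G$ and $\bar H$ are cospectral too.

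For the harder direction ($\Rightarrow$), I would take $Q:=(W(H)W(G)^{-1})^T$, which is rational because both factors are integer matrices. The preparatory step, and what I expect to be the main obstacle, is to show that $G$ and $H$ having the same generalized spectrum forces
$$W(G)^TW(G)=W(H)^TW(H),$$
equivalently, the walk counts $\mu_k:=e^TA^ke$ agree for all $k$. This follows from the classical identity expressing the characteristic polynomial of $\bar G$ in terms of that of $G$ and the walk generating function $e^T(I-xA)^{-1}e=\sum_{k\ge 0}\mu_k x^k$; matching both characteristic polynomials then forces the sequence $(\mu_k)$ to match. As a byproduct $(\det W(H))^2=(\det W(G))^2\neq 0$, so $H\in\mathcal{G}_n$ automatically.

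Given this Gram-matrix identity, the remaining verification is routine. Orthogonality is $Q^TQ=W(G)^{-T}W(H)^TW(H)W(G)^{-1}=W(G)^{-T}W(G)^TW(G)W(G)^{-1}=I$. Since the first column of $W(G)$ is $e$, we have $W(G)^{-1}e=e_1$, whence $Q^Te=W(H)e_1=e$ and then $Qe=QQ^Te=e$. Finally, $Q^TW(G)=W(H)$ gives $Q^TA(G)^ke=A(H)^ke$ for $k=0,\ldots,n-1$, so the operators $A(H)Q^T$ and $Q^TA(G)$ agree on every column of $W(G)$; these columns span $\mathbb{R}^n$ by controllability, forcing $A(H)Q^T=Q^TA(G)$, which rearranges to $Q^TA(G)Q=A(H)$. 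The whole forward direction thus hinges on the walk-count identity; every other step is a direct computation with $W(G)$ and its inverse.
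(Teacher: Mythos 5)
The paper does not prove this statement at all --- it is quoted from the reference [WX] as a known result --- so there is no internal proof to compare against. Your argument is essentially the standard proof from that source, and it is correct: the matrix $Q=(W(H)W(G)^{-1})^{T}$ (equivalently $W(G)W(H)^{-1}$, once orthogonality is known) is exactly the one used there, and the reduction to the identity $W(G)^{T}W(G)=W(H)^{T}W(H)$ via Cvetkovi\'c's formula expressing the walk generating function $\sum_k (e^{T}A^{k}e)\,t^{k}$ in terms of $\phi_G$ and $\phi_{\bar G}$ is the right key step. Two small points deserve a touch more care. First, in the final step you need $A(H)Q^{T}$ and $Q^{T}A(G)$ to agree on the last column $A(G)^{n-1}e$ of $W(G)$, which amounts to $Q^{T}A(G)^{n}e=A(H)^{n}e$; this is not among the identities $Q^{T}W(G)=W(H)$ directly provides, but follows by Cayley--Hamilton together with the fact that $G$ and $H$ have the same characteristic polynomial. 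Second, your displayed computation of $Q^{T}Q$ is actually $QQ^{T}$ for your choice of $Q$; harmless, since for a square matrix either identity with $I$ implies the other, but worth fixing.
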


Define
$$\mathcal{Q}_G=\left\{ \begin{array}{rrr}Q~\mbox{is~a~rational~orthogonal}&\vline&Q^TAQ ~\mbox{is~a~ symmetric~(0,1)-matrix}\\
   ~\mbox{matrix with}~Qe=e~~~~&\vline&\mbox{with~zero~diagonal~~~~~~~~~~}
  \end{array}\right\},$$ where $e$ is the all-one vector. We have the following theorem:

\begin{them}[\cite{WX}]\label{XXX} Let $G\in\mathcal{G}_n$. Then $G$ is DS w.r.t. the generalized spectrum if and only if the set $\mathcal{Q}_G$ contains only permutation matrices.
\end{them}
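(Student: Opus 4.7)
The plan is to derive both directions of the equivalence directly from Theorem 2.1, with the nontrivial input being controllability ($\det W \neq 0$), which lets us upgrade an arbitrary rational orthogonal $Q$ to a permutation matrix in the forward direction. The backward direction is essentially unpacking definitions.

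For the backward direction, I would suppose every $Q \in \mathcal{Q}_G$ is a permutation matrix and take any graph $H$ cospectral with $G$ w.r.t.\ the generalized spectrum. By Theorem 2.1 there is a rational orthogonal $Q$ with $Qe=e$ and $Q^{T}A(G)Q=A(H)$. Since $A(H)$ is a symmetric $(0,1)$-matrix with zero diagonal, $Q$ lies in $\mathcal{Q}_G$, hence is a permutation matrix, giving $H \cong G$. So $G$ is DGS.

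The forward direction is the substantive one. Assume $G$ is DGS and let $Q \in \mathcal{Q}_G$. Then $A(H) := Q^{T}A(G)Q$ is a symmetric $(0,1)$-matrix with zero diagonal, so it is the adjacency matrix of a graph $H$. Applying Theorem 2.1 (in the ``if'' direction) shows that $H$ is cospectral with $G$ w.r.t.\ the generalized spectrum. Since $G$ is DGS, $H \cong G$, so there exists a permutation matrix $P$ with $P^{T}A(G)P = A(H) = Q^{T}A(G)Q$. The goal is then to conclude $Q=P$, which will make $Q$ a permutation matrix.

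To close this last step I would set $R := QP^{T}$, which is rational orthogonal, satisfies $Re = QP^{T}e = Qe = e$, and (by rearranging $P^{T}AP = Q^{T}AQ$ and using orthogonality of $R$) commutes with $A = A(G)$. Commuting with $A$ gives $RA^{k}e = A^{k}Re = A^{k}e$ for every $k \geq 0$, so $RW = W$ where $W$ is the walk-matrix of $G$. Because $G \in \mathcal{G}_n$ we have $\det W \neq 0$, forcing $R = I$ and hence $Q = P$. This controllability step is the only place nontriviality enters, and is also the main (small) obstacle: one has to carefully verify that $R^{T}AR = A$ together with $R^{T}R = I$ really yields $AR = RA$, and that the walk-matrix identity $RW=W$ indeed pins $R$ down to the identity. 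Combining the two directions gives the equivalence.
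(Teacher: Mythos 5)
Your proposal is correct, and it follows essentially the same route as the proof in the cited source \cite{WX} (the paper itself quotes this theorem without proof): the backward direction is definition-unpacking via Theorem 2.1, and the forward direction reduces to showing that $R=QP^{T}$ commutes with $A$, fixes $e$, hence fixes the walk-matrix, and is therefore the identity by nonsingularity of $W$. All the steps you flag as needing care ($R^{T}AR=A$ plus orthogonality giving $AR=RA$, and $RW=W$ forcing $R=I$) do go through exactly as you describe.
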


By the theorem above, in order to determine whether a given graph
$G\in\mathcal{G}_n$ is DGS or not w.r.t. the generalized spectrum,
one needs to determine all $Q$'s in $\mathcal{Q}_G$ explicitly.
At first glance, this seems to be as difficult as the original problem.
However, we have managed to overcome this difficulty by introducing the following useful notion.

The \textit{level} of a rational orthogonal matrix $Q$ with $Qe=e$
is the smallest positive integer $\ell$ such that $\ell Q$ is an integral
matrix. Clearly, $\ell$ is the least common denominator of all the
entries of the matrix $Q$. If $\ell = 1$, then clearly $Q$ is a permutation matrix.

 Recall that an $n$ by $n$
matrix $U$ with integer entries is called \textit{unimodular} if $\det(U) = \pm1$. The \textit{Smith Normal
Form }(SNF in short) of an integral matrix $M$ is of the form $diag(d_1,d_2,\cdots,d_n)$, where
$d_i$ is the $i$-th elementary divisor of the matrix $M$ and $d_i|d_{i+1}~(i = 1,2,\cdots,n-1)$ hold.
The following theorem is well known.

\begin{them} For every integral matrix $M$ with full rank, there exist unimodular
matrices $U$ and $V$ such that $ M = USV = Udiag(d_1,d_2,\cdots,d_n)V$ , where $S$ is the SNF
of the matrix $M$.
\end{them}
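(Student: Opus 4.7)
The plan is to prove existence by a constructive reduction: I would transform $M$ into a diagonal matrix with the required divisibility pattern via a finite sequence of elementary integer row and column operations, each of which corresponds to multiplication by a unimodular matrix. The allowed operations are swapping two rows, multiplying a row by $-1$, and adding an integer multiple of one row to another, together with the analogous column operations; each is realized by a unimodular elementary matrix, so the desired $U$ and $V$ will arise as the accumulated products of these elementary factors acting on the left and on the right respectively.

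The reduction runs two nested loops. First, to install a pivot at position $(1,1)$, I would locate a nonzero entry of smallest absolute value and swap it into $(1,1)$; call its value $d$. If some entry $a$ in row $1$ or column $1$ is not a multiple of $d$, a single row/column addition using $a=qd+r$ with $0\le r<|d|$ replaces $a$ by $r$, and swapping $r$ into the pivot position strictly decreases $|d|$. By well-ordering of $\mathbb{N}$ this loop halts, after which a clearing pass of row and column additions zeros out row $1$ and column $1$ outside position $(1,1)$. Second, if the pivot $d$ fails to divide some entry $m_{ij}$ of the remaining $(n-1)\times(n-1)$ block $M'$, I would add row $i$ to row $1$, producing a non-multiple of $d$ in row $1$, and re-enter the pivot loop; this again strictly decreases $|d|$. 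When both loops close, $d_1:=d$ divides every entry of $M'$, and since $\det M=\pm d_1\det M'\ne 0$, $M'$ is itself full rank, so recursion on $M'$ produces $d_2,\ldots,d_n$ with $d_i\mid d_{i+1}$.

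Finally, to justify identifying the resulting diagonal entries with the elementary divisors in the paper's definition of the SNF, I would invoke invariance of the determinantal divisors $\Delta_k(M)=\gcd$ of all $k\times k$ minors of $M$ under unimodular left and right multiplication: applied to the diagonal matrix $S$ this gives $\Delta_k(S)=d_1d_2\cdots d_k$, so $d_k=\Delta_k(M)/\Delta_{k-1}(M)$ is intrinsic to $M$ and matches the $k$-th elementary divisor. The only delicate point in the plan is tracking termination of the interlocking pivot and divisibility loops; both are controlled by the strict decrease of the positive integer $|d|$, so well-ordering of $\mathbb{N}$ supplies the finiteness, and the induction on $n$ supplied by the recursion on $M'$ closes the argument.
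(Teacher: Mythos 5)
Your proposal is a correct and complete rendering of the standard constructive proof of the Smith Normal Form: the interlocking pivot/divisibility loops terminate by the strict decrease of the positive integer $|d|$, the recursion on the full-rank block $M'$ is sound, and the identification of the diagonal entries with the elementary divisors via the determinantal divisors $\Delta_k(M)$ is the right way to close the gap between ``some diagonal form with $d_i \mid d_{i+1}$'' and ``\emph{the} SNF.'' Note, however, that the paper itself offers no proof of this statement --- it is quoted as a well-known classical fact --- so there is nothing to compare against; your argument simply supplies the standard justification that the paper takes for granted.
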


The following theorem shows that the level a rational orthogonal matrix $Q\in{\mathcal{Q}(G)}$ always divides the $n$-th elementary divisor of the walk-matrix.
\begin{them}[\cite{WX}]\label{L0}
Let $W$ be the walk-matrix of a graph $G\in{\mathcal{G}_n}$,
and $Q\in{\mathcal{Q}(G)}$ with level $\ell$. Then we have $\ell|d_n$, where $d_n$ is the $n$-th elementary
divisor of the walk-matrix $W$.
\end{them}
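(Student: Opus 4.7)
The plan is to exhibit $Q$ explicitly as a quotient of two integer walk-matrices, and then read off the required divisibility directly from the Smith Normal Form of $W=W(G)$.

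First, I would translate the hypothesis $Q\in\mathcal{Q}(G)$ into a linear relation between walk-matrices. Set $B=Q^TAQ$, so that $A=QBQ^T$; iterating gives $A^k=QB^kQ^T$ for every $k\geq 0$. Because $Q$ is orthogonal with $Qe=e$, one also has $Q^Te=e$, and hence $A^ke=QB^ke$. Reading these identities column by column,
\[
W(G)=Q\,W(H),\qquad W(H):=[e,Be,B^2e,\ldots,B^{n-1}e].
\]
Since $B$ is a $(0,1)$-matrix, $W(H)$ is integral; since $\det(Q)=\pm 1$ and $G\in\mathcal{G}_n$, the matrix $W(H)$ is also invertible. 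Taking transposes yields the key identity $Q^T=W(H)\,W(G)^{-1}$.

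Next, I would plug in the SNF of $W=W(G)$ to control the denominators on the right-hand side. Write $W=U\,\operatorname{diag}(d_1,\ldots,d_n)\,V$ with $U,V$ unimodular, so that $W^{-1}=V^{-1}\operatorname{diag}(d_1^{-1},\ldots,d_n^{-1})\,U^{-1}$. Multiplying the identity $Q^T=W(H)\,W^{-1}$ by $d_n$ gives
\[
d_n Q^T=\bigl(W(H)\,V^{-1}\bigr)\cdot \operatorname{diag}\bigl(d_n/d_1,\ldots,d_n/d_{n-1},1\bigr)\cdot U^{-1}.
\]
Each factor on the right is an integer matrix: $W(H)V^{-1}$ is a product of two integer matrices (the second unimodular); the diagonal factor has integer entries because $d_j\mid d_n$ for every $j$; and $U^{-1}$ is integer because $U$ is unimodular. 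Hence $d_nQ^T$, and therefore $d_nQ$, is integer, and by the definition of the level this forces $\ell\mid d_n$.

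There is no substantive obstacle in this argument: the only conceptual step is the reduction $Q^T=W(H)\,W(G)^{-1}$, after which the divisibility $\ell\mid d_n$ is forced by the denominator pattern of $\operatorname{diag}(d_j^{-1})$ in the SNF inverse. That reduction in turn rests on two elementary facts, namely that $Qe=e$ together with orthogonality implies $Q^Te=e$, and that the conjugation relation $A=QBQ^T$ transfers to walk-matrices column by column.
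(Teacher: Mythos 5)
Your proof is correct, and it follows essentially the same route as the original argument in [WX] (the paper itself only cites this theorem without proof, but the key identity $Q^TW(G)=W(H)$ you derive is exactly the one the paper re-derives inside the proof of Lemma 3.1, and the rest is the standard denominator bookkeeping via the Smith Normal Form $W=U\,\mathrm{diag}(d_1,\ldots,d_n)V$). Both the step $Q^Te=e$ from orthogonality plus $Qe=e$, and the conclusion that $d_nQ$ integral forces $\ell\mid d_n$, are justified as you state them.
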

By the above theorem, $\ell$ is a divisor of $d_n$, and hence
is a divisor of $\det(W)$. However, not all divisors of $\det(W)$ can be a divisor of $\ell$, as shown by the following theorem.

\begin{them}[\cite{WWW}] \label{FF} Let $G\in{\mathcal{G}_n}$. Let $Q \in{\mathcal{ Q}_G}$ with level $\ell$, and $p$ be an odd prime. If $p|\det(W)$ and $p^2\not|\det(W)$, then $p$ cannot be a divisor of $\ell$.
\end{them}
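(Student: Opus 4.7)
The plan is a proof by contradiction: assume $p\mid\ell$. Set $R=\ell Q\in M_n(\mathbb{Z})$ and exploit the four integer identities
\[
R^TR=\ell^2 I,\qquad Re=\ell e,\qquad R^TW(G)=\ell W(H),\qquad AR=RB,
\]
where $B:=A(H)$ is a symmetric $(0,1)$-matrix with zero diagonal; the third follows from $Q^TW(G)=W(H)$ and the fourth from $Q^TAQ=B$. The hypothesis $p\,\|\,\det(W)$ says that in the Smith normal form $W=U\,\mathrm{diag}(d_1,\dots,d_n)\,V$, the invariant factors $d_1,\dots,d_{n-1}$ are coprime to $p$ while $p\,\|\,d_n$. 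Writing $\ell=p^am$ with $\gcd(p,m)=1$ and $a\ge 1$, substitute the SNF into $R^TW=\ell W(H)$ and compare columns: the first $n-1$ columns of $R^TU$ must be divisible by $p^a$ (since the $d_j$, $j<n$, are units mod $p^a$) and the last column by $p^{a-1}$ (since $p\,\|\,d_n$). Hence $p^{a-1}\mid R$ entrywise; by the minimality of $\ell$, this forces $a=1$, so $\ell=pm$.

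From $R^TW\equiv 0\pmod p$ and the one-dimensional left null space of $W\!\!\pmod p$ (spanned by some nonzero $v\in\mathbb F_p^n$), one gets $R\equiv vu^T\pmod p$ for some nonzero $u\in\mathbb F_p^n$ (nonzero again by minimality of $\ell$). Reducing $AR=RB$ mod $p$ and matching the rank-one factorizations yields a common eigenvalue $\lambda\in\mathbb F_p$ with $Av\equiv\lambda v$ and $Bu\equiv\lambda u$. Reducing $Re=\ell e$, $R^TR=\ell^2 I$, and $RR^T=\ell^2 I$ mod $p^2$ and dividing by $p$ supplies the orthogonality and isotropy relations $v^Te\equiv u^Te\equiv 0$ and $v^Tv\equiv u^Tu\equiv 0\pmod p$.

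The odd-prime hypothesis enters at the next $p$-adic level. Writing $R=vu^T+pR_1$ with $R_1\in M_n(\mathbb Z)$ and setting $\sigma:=v^Tv/p\in\mathbb Z$, the exact expansion of $R^TR=\ell^2 I$ at the $(i,i)$-entry gives
\[
u_i^2\sigma+2u_i\,v^T(R_1)_i+p\,\|(R_1)_i\|^2=pm^2.
\]
Reducing mod $p$ and using that $p$ is odd, so $2$ is invertible, pins down $v^T(R_1)_j\equiv -u_j\sigma/2\pmod p$ for every $j$. A parallel expansion of $R^TAR=\ell^2 B$, exploiting the zero-diagonal condition $B_{ii}=0$ together with the integer lift $Av=\lambda v+pw$, similarly determines $v^TA(R_1)_j\pmod p$ and forces the compatibility $v^Tw\equiv 0\pmod p$.

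The \textbf{main obstacle} is to convert this overdetermined system of mod-$p$ congruences into a genuine contradiction. My plan is to bring in the walk-matrix identities at the $p^2$-level. From $R^TW(G)=pm\,W(H)$, dividing by $p$ gives $u\Phi+R_1^TW(G)\equiv m\,W(H)\pmod p$ with $\Phi:=v^TW(G)/p$; pairing from the left with $u^T$ and using $u^Tu\equiv u^Te\equiv 0$ forces $R_1u\equiv c\,v\pmod p$ for some scalar $c$, and a symmetric argument via $\ell W(G)=RW(H)$ yields $R_1^Tv\equiv-\sigma u/2\pmod p$. Inserting these rank-one constraints on $R_1$ into the next-order form of $AR=RB$, namely $AR_1-R_1B=wu^T-vw'^T$ (with $Bu=\lambda u+pw'$), together with the normalization $R_1e=me-\tau v$ (where $u^Te=p\tau$), should reduce everything to a single vector equation in $\mathbb F_p^n$ that, combined with the discreteness of $B\in\{0,1\}^{n\times n}$, can only be satisfied by $u\equiv 0\pmod p$---contradicting the rank-one decomposition. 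The delicate part is isolating the precise column of $R$ where the $(0,1)$-integrality of $B$ fails; the oddness of $p$, already decisive in the pinning-down step, should once more be the ingredient that excludes the residual symmetric ambiguity responsible for the genuine difficulty of the $p=2$ case.
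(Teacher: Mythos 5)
Your proposal stalls exactly where the theorem becomes nontrivial, so it has a genuine gap. (Note also that the paper itself states this result without proof, citing [WWW]; the closest thing to a template here is the paper's own $p=2$ argument in Lemmas 3.1 and 3.7--3.9.) Everything up through your ``orthogonality and isotropy relations'' is correct and standard: $a=1$ via the SNF, the rank-one reduction $R\equiv vu^T\pmod p$ coming from $\mathrm{rank}_p(W)=n-1$, the common eigenvalue $\lambda$, and the relations $v^Te\equiv u^Te\equiv v^Tv\equiv u^Tu\equiv 0\pmod p$. But these mod-$p$ conditions are mutually consistent --- an eigenvector $v$ of $A$ over $\mathbf F_p$ with $v^Te\equiv v^Tv\equiv 0$ is in no way impossible --- so no contradiction can come from them alone, and your final paragraph, which is where the contradiction must be produced, is written entirely in the conditional (``should reduce\dots'', ``can only be satisfied by\dots'', ``the delicate part is\dots''). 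That paragraph is the theorem; it is not proved. A secondary slip: the congruence $u_j^2\sigma+2u_jv^T(R_1)_j\equiv 0\pmod p$ pins down $v^T(R_1)_j$ only for those $j$ with $u_j\not\equiv 0\pmod p$, not ``for every $j$''.

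The ingredient your plan is missing is the full family of second-order congruences $\bar u^TA^k\bar u=\ell^2(B^k)_{ii}\equiv 0\pmod{p^2}$ for \emph{all} $k=0,1,\dots,n-1$, where $\bar u$ is a column of $\ell Q$ with $\bar u\not\equiv 0\pmod p$; you only exploit $k=0$ and $k=1$ (via $R^TR$ and $R^TAR$). In the known argument these $n$ congruences, combined with the fact that the one-dimensional null space of $W^T$ mod $p$ lies inside $\mathrm{col}_p(W)$ (which follows from the total isotropy $v^TA^kv\equiv 0$ together with $\dim\mathrm{col}_p(W)=n-1$), let one write $\bar u\equiv Wz\pmod p$ and push the relations down to a statement of the form ``(a $p$-divided walk-matrix product)$\,\cdot z\equiv 0\pmod p$''; the hypothesis $p^2\nmid\det(W)$ then enters a second time to show that matrix has full column rank mod $p$, forcing $z\equiv 0$ and hence $\bar u\equiv 0$, a contradiction. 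This is precisely the mechanism of Lemmas 3.7--3.9 in the $p=2$ case (there the relevant object is $\mathrm{rank}_2(W^T\tilde W_1/2)$). Your plan instead hopes to extract the contradiction from $AR=RB$ at the next $p$-adic order together with ``the discreteness of $B$''; nothing you have written identifies a quantity that is forced to be both an integer in $\{0,1\}$ and a nonintegral (or out-of-range) value, so as it stands the argument does not close.
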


Motivated by above theorem, in \cite{WWW}, the author introduced a large family of graphs~(which might have density around 0.2, as suggested by some numerical experiments; see Section 4):
\begin{equation}\label{Defi}
\mathcal{F}_n = \{G \in{\mathcal{G}_n}|~\frac{\det(W)}{2^{\lfloor\frac{n}{2}\rfloor}}~\mbox{is~an odd~square-free~integer}\}.
\end{equation}

As a simple consequence of Theorem~\ref{FF}, we have

\begin{them}\label{Old}
Let $G\in{\mathcal{F}_n}$. Let $Q\in{\mathcal{Q}_G}$ with level $\ell$. Then either $\ell=2^m$ for some integer $m\geq 0$.
\end{them}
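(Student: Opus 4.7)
The plan is to derive this statement as a direct corollary of Theorem~\ref{FF}, combined with the divisibility information supplied by Theorem~\ref{L0} and the explicit form of $\det(W)$ forced by membership in $\mathcal{F}_n$.

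First I would unpack the hypothesis $G\in\mathcal{F}_n$. By the definition in~\eqref{Defi}, we can write $\det(W)=2^{\lfloor n/2\rfloor}\cdot k$, where $k$ is an odd square-free integer. The key arithmetic observation is that every odd prime $p$ dividing $\det(W)$ must divide $k$, and since $k$ is square-free and coprime to $2$, the $p$-adic valuation of $\det(W)$ is exactly $1$. In other words, for every odd prime $p$ with $p\mid\det(W)$ we automatically have $p^2\nmid\det(W)$, which is precisely the hypothesis required to invoke Theorem~\ref{FF}.

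Next I would bring in Theorem~\ref{L0}: for any $Q\in\mathcal{Q}_G$ with level $\ell$, one has $\ell\mid d_n$, and since $d_n\mid\det(W)$, every prime divisor of $\ell$ is a prime divisor of $\det(W)$. Combining this with the previous paragraph, any odd prime divisor $p$ of $\ell$ would satisfy $p\mid\det(W)$ and $p^2\nmid\det(W)$; but Theorem~\ref{FF} rules out exactly this situation, a contradiction. Hence $\ell$ has no odd prime divisors, which forces $\ell=2^m$ for some integer $m\geq 0$.

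There is essentially no technical obstacle here, since the heavy lifting has already been done in the earlier theorems; the proof is simply a matter of reading off the prime factorization of $\det(W)$ guaranteed by the definition of $\mathcal{F}_n$ and noting that all odd prime divisors appear with multiplicity one, so that the prime-by-prime exclusion provided by Theorem~\ref{FF} applies to every odd prime at once. The only small bookkeeping point to be careful about is to cover the edge case $\ell=1$ (which corresponds to $m=0$ and to $Q$ being a permutation matrix), and this is handled automatically by allowing $m\geq 0$ in the conclusion.
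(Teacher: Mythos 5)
Your proof is correct and is exactly the argument the paper intends when it states this result ``as a simple consequence of Theorem~\ref{FF}'': the definition of $\mathcal{F}_n$ makes every odd prime divisor of $\det(W)$ appear to the first power, Theorem~\ref{L0} confines the prime divisors of $\ell$ to those of $\det(W)$, and Theorem~\ref{FF} then excludes every odd prime from $\ell$. Nothing is missing.
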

Thus, if we can eliminate the possibility that $2\not|\ell$, then Theorem~\ref{Main} follows immediately. In the next section, we will show this is actually the true, which gives a proof of Theorem~\ref{Main}.

\section{Proof of Theorem 1.1}
In this section, we give the proof of Theorem \ref{Main}. Before doing so, we need several lemmas below, the first few of which are taken from \cite{WWW}. In what follows, we will use the finite $\textbf{F}_p$ and ${\rm mod}~p$ (for a prime $p$) interchangeably.

\begin{lm}[c.f.~\cite{WWW}]\label{NB} Let $G\in{\mathcal{G}_n}$. If there is a rational orthogonal matrix $Q\in{\mathcal{Q}_G}$ with level $\ell$ such that $2|\ell$, then there exists a (0,1)-vector $u$ with $u\not\equiv 0~(\rm{mod}~2)$ such that
\begin{equation}\label{EE1}
u^TA^ku\equiv 0~({\rm mod}~4),~{\rm for}~ k=0,1,2,\cdots,n-1.
\end{equation}
 Moreover, $u$ satisfies $W^Tu\equiv~0~(\rm {mod}~ 2)$.
\end{lm}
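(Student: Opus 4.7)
The plan is to extract the desired $(0,1)$-vector $u$ from the columns of the scaled integral matrix $N := \ell Q$. Since $Q$ is orthogonal with $B := Q^T A Q$ a $(0,1)$-matrix, we have $Q^T A^k Q = B^k$ for every $k$, and clearing denominators gives $N^T A^k N = \ell^2 B^k$. Reading off a diagonal entry yields $u_i^T A^k u_i = \ell^2 (B^k)_{ii}$ for each column $u_i$ of $N$; because $2\mid\ell$, this is automatically $\equiv 0\pmod{4}$ for every $k=0,1,\ldots,n-1$.

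Next, I would replace the integral vector $u_i$ by its entrywise reduction $\bar u_i\in\{0,1\}^n$. Writing $u_i = 2v+\bar u_i$ with $v$ integral and expanding, the symmetry of $A^k$ collapses the two cross terms into $4v^T A^k \bar u_i$, so
\[
u_i^T A^k u_i \;\equiv\; \bar u_i^T A^k \bar u_i \pmod{4}.
\]
Hence $\bar u_i$ inherits the mod-$4$ congruence from $u_i$. At least one $\bar u_i$ must be nonzero, for otherwise every column of $N$ would be even and $\tfrac{1}{2}N = \tfrac{\ell}{2}Q$ would be integral, contradicting the minimality of the level $\ell$. I then define $u := \bar u_i$ for any such index $i$.

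For the remaining assertion $W^T u \equiv 0\pmod{2}$, I will use that $Q$ transports the walk-matrix of $G$ to that of the graph $H$ with adjacency matrix $B$: from $Q^T e = e$ and $Q^T A^k Q = B^k$ one obtains $Q^T A^k e = B^k e$, so $Q^T W(G) = W(H)$ is integral. Multiplying by $\ell$ yields $N^T W(G) = \ell W(H) \equiv 0\pmod{\ell}$, and in particular modulo $2$. Restricting to the $i$-th row gives $u_i^T W \equiv 0\pmod{2}$, and reducing mod $2$ yields $u^T W \equiv 0\pmod{2}$.

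The main obstacle is the passage from an arbitrary integer column $u_i$ to the $(0,1)$-vector $u$ while preserving the congruence modulo $4$: the symmetry of $A^k$ is precisely what merges the two cross terms into a multiple of $4$, and the minimality of $\ell$ is what guarantees that some reduction $\bar u_i$ is nonzero. Without either ingredient, one would obtain only congruences modulo $2$, which would be too weak for the subsequent use of this lemma in the proof of Theorem \ref{Main}.
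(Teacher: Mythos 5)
Your proposal is correct and follows essentially the same route as the paper's proof: take a column of $\ell Q$ that is odd modulo $2$ (which exists by minimality of the level), read the congruence $u_i^TA^ku_i=\ell^2(B^k)_{ii}\equiv 0\pmod 4$ off the diagonal of $Q^TA^kQ=B^k$, pass to the $(0,1)$-reduction using the symmetry of $A^k$ to absorb the cross terms, and derive $W^Tu\equiv 0\pmod 2$ from $W(G)^TQ=W(H)$ being integral. No gaps.
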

\begin{proof} $Q\in{\mathcal{Q}_G}$ implies that $Q^TAQ = B$ for some $(0,1)$-matrix $B$ which is the adjacency matrix of a graph $H$. Let $\bar{u}$ be the $i$-th
column of $\ell Q$ with $\bar{u}\not\equiv 0~(\rm{mod}~2)$ (such a $\bar{u}$ always exists by the definition of the level of $Q$). It follows from $Q^TA^kQ = B^k$ that $\bar{u}^TA^k\bar{u} = \ell^2(B^k)_{i,i}\equiv 0~(\rm mod~4)$.
 Let $\bar{u}= u+ 2v$, where $u$ is a $(0,1)$-vector and $v$ is an integral vector.
 Then
$$\bar{u}^TA^k\bar{u} = u^TA^ku + 4u^TA^kv + 4v^TA^kv\equiv 0~ (\rm~mod~4).$$
Thus, Eq. (\ref{EE1}) follows. To show the last assertion, notice that $Q^TA^kQ = B^k$ and $Qe = e$, it follows that $$Q^T[e,Ae,\cdots,A^{n-1}e]=[e,Be,\cdots,B^{n-1}e],$$
i.e., $W(G)^TQ=W(H)$ is an integral matrix. Thus $W(G)^T u \equiv 0~(\rm~mod~2)$ holds. This completes the proof.
\end{proof}

\begin{lm}[\cite{WWW}] \label{Le1}$e^TA^le$ is even for any integer $l\geq 1$.
\end{lm}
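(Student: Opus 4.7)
My plan is to interpret $e^TA^le=\sum_{i,j}(A^l)_{ij}$ as the total number of walks of length $l$ in $G$ and to control its parity via a combination of a reversal involution (for odd $l$) and an exponent-halving identity (for even $l$).

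For odd $l$, I would consider the involution sending a walk $(v_0,v_1,\ldots,v_l)$ to its reverse $(v_l,v_{l-1},\ldots,v_0)$. The fixed points are the palindromic walks, i.e.\ those satisfying $v_k=v_{l-k}$ for every $k$. Plugging in $k=(l-1)/2$, which is an integer since $l$ is odd, forces $v_{(l-1)/2}=v_{(l+1)/2}$; but these are consecutive vertices in a walk, hence joined by an edge, and since $G$ is a simple loopless graph this is impossible. The involution is therefore fixed-point-free and $e^TA^le$ is even.

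For even $l=2m$, the symmetry of $A$ gives $A^{2m}=(A^m)^TA^m$, so
\begin{equation*}
e^TA^{2m}e=(A^me)^T(A^me)=\sum_{i=1}^n (A^me)_i^2\equiv\sum_{i=1}^n(A^me)_i=e^TA^me\pmod 2,
\end{equation*}
using that $x^2\equiv x\pmod 2$ for every integer $x$. Iterating this halving, if we write $l=2^sq$ with $q$ odd and $q\ge 1$, then $e^TA^le\equiv e^TA^qe\pmod 2$, which is even by the odd case. The only mildly delicate ingredient is the palindromic-walk obstruction that rules out fixed points when $l$ is odd; the even case and the final reduction are routine parity manipulations.
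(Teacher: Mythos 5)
Your proof is correct, but it is organized quite differently from the one in the paper. The paper argues uniformly in $l$: first $e^TA^le\equiv \operatorname{Trace}(A^l)\pmod 2$ because the off-diagonal entries of the symmetric matrix $A^l$ pair up, and then $\operatorname{Trace}(A^l)=\sum_{i,j}a_{ij}(A^{l-1})_{ij}=2\sum_{i<j}a_{ij}(A^{l-1})_{ij}$ is even because $A$ and $A^{l-1}$ are symmetric and $A$ has zero diagonal. You instead split by parity of $l$: for odd $l$ you make the reversal of walks a fixed-point-free involution (the nice observation being that a palindromic walk of odd length would force $v_{(l-1)/2}=v_{(l+1)/2}$, i.e.\ a loop, which a simple graph forbids), and for even $l=2m$ you use the Gram identity $e^TA^{2m}e=\sum_i(A^me)_i^2\equiv\sum_i(A^me)_i=e^TA^me\pmod 2$ and iterate down to the odd part of $l$. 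Both steps are sound; note that your reduction $e^TA^le\equiv e^TA^qe$ lands at $q\ge 1$ odd, and $q=1$ is covered by your involution (no walk $(v_0,v_1)$ is its own reverse), so the base case is fine. What the paper's route buys is brevity and no case analysis; what yours buys is a purely combinatorial explanation of the odd case and an even-case identity that is, in essence, the same $\sum_j m_{ij}^2\equiv\sum_j m_{ij}$ device the paper later isolates as Lemma~\ref{beauty}, so your argument fits naturally into the paper's toolkit even though it is not the proof given there.
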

\begin{proof} We give a short proof for completeness. Let $A^l:=(b_{ij})$. Note that
\begin{eqnarray*}e^TA^{l}e &=& {\rm Trace}(A^l) +\sum_{i\neq j}b_{ij}\\
& =& {\rm Trace}(A^l) + 2\sum_{1\leq i<j\leq n}b_{ij}\\
&\equiv& {\rm Trace}(A^l)~(\rm mod~2).
\end{eqnarray*}
Moreover, we have ${\rm Trace}(A^l) = {\rm Trace}(AA^{l-1}) =\sum_{i,j}a_{ij}\tilde{b}_{ij}=2\sum_{i<j}a_{ij}\tilde{b}_{ij}$, where$A^{l-1}:=(\tilde{b}_{ij})$. Thus the lemma follows.
\end{proof}

\begin{lm}[\cite{WWW}]\label{H1} ${\rm rank}_2(W)\leq \lceil\frac{n}{2}\rceil$, where ${\rm rank}_2(W)$ denotes the rank of $W$ over the finite field $\textbf{F}_2$.
\end{lm}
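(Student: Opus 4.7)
My plan is to view the columns of $W$ as vectors in $\textbf{F}_2^n$ equipped with the standard symmetric bilinear form $B(x,y) = x^T y$ (reduced mod $2$), which is non-degenerate. The key computation is
\[
(W^T W)_{i+1,\,j+1} \;=\; (A^i e)^T (A^j e) \;=\; e^T A^{i+j} e \qquad (0 \le i,j \le n-1).
\]
By Lemma~\ref{Le1}, this quantity is even whenever $i+j \ge 1$; the only entry of $W^T W$ that might be odd is $(W^T W)_{1,1} = e^T e = n$.

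Set $U := \mathrm{span}_{\textbf{F}_2}\{Ae, A^2 e, \ldots, A^{n-1} e\}$. The computation above shows $B(u,u') = 0$ for all $u, u' \in U$, i.e.\ $U \subseteq U^{\perp}$. Since $B$ is non-degenerate, $\dim U^{\perp} = n - \dim U$, so $2\dim U \le n$ and hence $\dim U \le \lfloor n/2 \rfloor$. Writing $V$ for the $\textbf{F}_2$-column-span of $W$, we have $V = \mathrm{span}\{e\} + U$, which immediately gives $\mathrm{rank}_2(W) = \dim V \le 1 + \lfloor n/2 \rfloor$. This already matches $\lceil n/2 \rceil$ in the odd case.

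When $n$ is even the naive bound above is off by one, so an extra observation is needed: $B(e,e) = n \equiv 0 \pmod{2}$ and $B(e, A^i e) = e^T A^i e \equiv 0 \pmod{2}$ for $i \ge 1$, so actually the entire column-span $V$ is self-orthogonal, i.e.\ $V \subseteq V^{\perp}$. Non-degeneracy then yields $\dim V \le n/2 = \lceil n/2 \rceil$ directly. Combining both parities finishes the proof.

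I do not expect any real obstacle here: the whole argument is essentially a repackaging of Lemma~\ref{Le1} as a self-orthogonality statement over $\textbf{F}_2$, together with the standard dimension bound for totally isotropic subspaces of a non-degenerate symmetric bilinear form. The only mild subtlety is ensuring the bound is tight in the even case, which is handled by absorbing the vector $e$ itself into the self-orthogonal subspace.
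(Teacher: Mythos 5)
Your proof is correct. The paper states this lemma without proof (citing \cite{WWW}), so there is nothing in the text to compare against directly; the argument usually given reduces everything to the observation that, by Lemma~\ref{Le1}, every entry of $W^TW$ is even except possibly the $(1,1)$ entry $e^Te=n$, so that ${\rm rank}_2(W^TW)\le 1$, and then Sylvester's rank inequality $2\,{\rm rank}_2(W)-n\le {\rm rank}_2(W^TW)$ gives $2\,{\rm rank}_2(W)\le n+1$, hence ${\rm rank}_2(W)\le\lceil n/2\rceil$ with no parity split. Your version packages the same congruences as a self-orthogonality statement for the column span under the (non-degenerate) standard form on $\mathbf{F}_2^n$ and uses the dimension bound $\dim U\le n-\dim U$ for $U\subseteq U^{\perp}$; this is logically equivalent, and your extra step for even $n$ (noting $e^Te=n$ is then also even, so $e$ itself can be absorbed into the isotropic subspace) is exactly what is needed to recover the sharp bound $\lceil n/2\rceil$ rather than $\lfloor n/2\rfloor+1$. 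The only thing the Sylvester formulation buys is that the integer rounding $2r\le n+1\Rightarrow r\le\lceil n/2\rceil$ handles both parities at once; your route costs one extra observation but is otherwise equally clean and fully rigorous.
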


\begin{lm}[\cite{WWW}] Let $\det(W)=\pm 2^\alpha p_1^{\alpha_1}p_2^{\alpha_2}\cdots p_s^{\alpha_s}$ be the standard prime decomposition of $\det(W)$. Then $\alpha\geq \lfloor\frac{n}{2}\rfloor$.
\end{lm}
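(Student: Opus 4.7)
The plan is to combine Lemma~\ref{H1}, which gives $\mathrm{rank}_2(W)\le\lceil n/2\rceil$, with the Smith Normal Form of the walk-matrix, in order to read off the 2-adic valuation of $\det(W)$ directly from the elementary divisors.

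Concretely, I would first invoke the SNF theorem recalled earlier in the preliminaries to write $UWV = S = \mathrm{diag}(d_1,d_2,\ldots,d_n)$, where $U$ and $V$ are unimodular and $d_1\mid d_2\mid\cdots\mid d_n$. Since $G\in\mathcal{G}_n$, the matrix $W$ has full rank, so all $d_i$ are nonzero, and $|\det(W)| = d_1d_2\cdots d_n$. In particular, the 2-adic valuation $\alpha=v_2(\det(W))$ equals $\sum_{i=1}^n v_2(d_i)$.

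Next, I would reduce the identity $UWV=S$ modulo $2$. Because $\det(U),\det(V)\in\{\pm1\}$, the reductions of $U$ and $V$ remain invertible over $\mathbf{F}_2$. Hence $\mathrm{rank}_2(W) = \mathrm{rank}_2(S)$, and the latter is exactly the number of indices $i$ for which $d_i\not\equiv 0\pmod 2$. Applying Lemma~\ref{H1}, the count of odd $d_i$'s is at most $\lceil n/2\rceil$, so at least $n-\lceil n/2\rceil=\lfloor n/2\rfloor$ of the elementary divisors are even. Each such $d_i$ contributes at least $1$ to $v_2(d_i)$, and summing gives $\alpha\ge\lfloor n/2\rfloor$, which is exactly the desired bound.

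I do not foresee a genuine obstacle here: the entire content of the statement is already packed into Lemma~\ref{H1}, and the role of the SNF is simply the standard translation between the $\mathbf{F}_2$-rank of an integer matrix and the 2-adic valuations of its elementary divisors. The only thing to be mildly careful about is that $W$ is square and of full rank over $\mathbb{Z}$, so every $d_i$ is a well-defined positive integer and the product formula for $|\det(W)|$ is literally valid; both facts are immediate from the assumption $G\in\mathcal{G}_n$.
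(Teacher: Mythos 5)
Your proof is correct and follows essentially the same route the paper relies on: the lemma is quoted from \cite{WWW} without a written proof here, but the identical argument (reduce the Smith Normal Form modulo $2$, use ${\rm rank}_2(W)\le\lceil n/2\rceil$ to conclude that at least $\lfloor n/2\rfloor$ elementary divisors are even) is exactly what the author carries out in the proof of Lemma~\ref{PO}. No gaps; the only hypotheses you need ($W$ square and nonsingular, so all $d_i$ are positive and $|\det W|=\prod d_i$) are indeed supplied by $G\in\mathcal{G}_n$.
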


\begin{lm}\label{PO} Let $G\in{\mathcal{F}_n}$. Then the SNF of $W$ is $$S=diag(\underbrace{1,1,\cdots,1}_{\lceil\frac{n}{2}\rceil},\underbrace{2,2,\cdots,2b}_{\lfloor\frac{n}{2}\rfloor}),$$ where the number of 2 in the diagonal of $S$ is $\lfloor\frac{n}{2}\rfloor$ and $b$ is an odd square-free integer. Moreover, we have $rank_2(W)=\lceil\frac{n}{2}\rceil$.
\end{lm}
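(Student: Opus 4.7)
The plan is to pin down every elementary divisor of $W$ by combining three facts already in hand: the rank bound ${\rm rank}_2(W)\leq \lceil n/2\rceil$ from Lemma~\ref{H1}, the divisibility $2^{\lfloor n/2\rfloor}\mid\det(W)$ from the preceding lemma, and the defining property of $\mathcal{F}_n$, namely that $b:=\det(W)/2^{\lfloor n/2\rfloor}$ is an odd square-free integer (so the $2$-adic part of $\det(W)$ is exactly $2^{\lfloor n/2\rfloor}$ and the odd part is exactly $b$).

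First I would write the SNF as $\mathrm{diag}(d_1,\dots,d_n)$ with $d_1\mid d_2\mid\cdots\mid d_n$ and pin down ${\rm rank}_2(W)$ exactly. Since ${\rm rank}_2(W)$ equals the number of $d_i$ coprime to $2$, and the chain condition forces those odd $d_i$'s to occupy the initial segment, the last $n-{\rm rank}_2(W)$ elementary divisors are all even and their product contributes at least $2^{n-{\rm rank}_2(W)}$ to $\det(W)$. Since the $2$-adic part of $\det(W)$ is exactly $2^{\lfloor n/2\rfloor}$, this gives $n-{\rm rank}_2(W)\leq \lfloor n/2\rfloor$, and combined with Lemma~\ref{H1} forces the equality ${\rm rank}_2(W)=\lceil n/2\rceil$, yielding the ``moreover'' clause. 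The same count shows that each of the $\lfloor n/2\rfloor$ even elementary divisors has $2$-adic valuation exactly $1$.

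Next I would extract the odd parts. Writing $d_{\lceil n/2\rceil+i}=2m_i$ with $m_i$ odd for $i=1,\dots,\lfloor n/2\rfloor$, the chain gives $m_1\mid m_2\mid\cdots\mid m_{\lfloor n/2\rfloor}$ and $\prod_i m_i = b$. Any prime $p\mid b$ that divides some $m_j$ persists in every later $m_i$; if it appeared in two or more of the $m_i$'s then $p^2\mid b$, contradicting the square-freeness of $b$. Hence each prime of $b$ sits only in $m_{\lfloor n/2\rfloor}$, forcing $m_1=\cdots=m_{\lfloor n/2\rfloor-1}=1$ and $m_{\lfloor n/2\rfloor}=b$. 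For the odd block, the chain yields $d_{\lceil n/2\rceil}\mid d_{\lceil n/2\rceil+1}=2$, so $d_{\lceil n/2\rceil}$ (being odd) equals $1$, and therefore $d_1=\cdots=d_{\lceil n/2\rceil}=1$.

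The argument is really arithmetic bookkeeping, so there is no substantial obstacle; the only delicate point is the last paragraph, where neither the square-freeness of $b$ nor the divisibility chain alone suffices, but their combined effect is what forces the entire odd factor of $\det(W)$ into the final elementary divisor.
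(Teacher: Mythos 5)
Your proof is correct and follows essentially the same route as the paper: both combine the bound ${\rm rank}_2(W)\leq\lceil n/2\rceil$ from Lemma~\ref{H1} with the fact that the $2$-adic valuation of $\det(W)$ is exactly $\lfloor n/2\rfloor$ to force $\lfloor n/2\rfloor$ even elementary divisors each with a single factor of $2$. You are in fact slightly more careful than the paper, which simply asserts without comment that the entire odd part $b$ concentrates in the last elementary divisor, whereas you spell out the divisibility-chain-plus-square-freeness argument that justifies it.
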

\begin{proof} By the definition of $\mathcal{F}_n$, we have $\det(W)=\pm 2^{\lfloor\frac{n}{2}\rfloor}p_1p_2\cdots p_s$, where $p_i$ is an odd prime number for each $i$. Thus the SNF of $W$ can be written as $S=diag(1,1,\cdots,1,2^{l_1},2^{l_2},\cdots, 2^{l_t}b)$, where $b=p_1p_2\cdots p_s$ is an odd square-free integer.
It follows from Lemma~\ref{H1} that $rank_2(W)\leq \lceil\frac{n}{2}\rceil$, i.e., $n-t\leq \lceil\frac{n}{2}\rceil$.
Thus, we have $t\geq n-\lceil\frac{n}{2}\rceil=\lfloor\frac{n}{2}\rfloor$. Moreover, we have $l_1+l_2+\cdots+l_t=\lfloor\frac{n}{2}\rfloor$, since $\det(W)=\pm \det(S)$. It follows that $l_1=l_2=\cdots=l_t=1$ and $t=\lfloor\frac{n}{2}\rfloor$, and $rank_2(W)=n-t=\lceil\frac{n}{2}\rceil$.

\end{proof}

\begin{lm}\label{MN} Let $G\in{\mathcal{G}_n}$ and ${\rm rank}_2(W)=\lceil\frac{n}{2}\rceil$. Then any set of $\lfloor\frac{n}{2}\rfloor $ independent column vectors of $W$ (when $n$ is odd, the first column of $W$ is not included) forms a set of fundamental solutions to $W^Tx\equiv 0~(\rm mod~2)$.
\end{lm}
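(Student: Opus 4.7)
The plan is to reduce the lemma to a direct computation of $W^T W$ modulo~$2$ combined with a rank--nullity argument over $\mathbf{F}_2$. The key observation is that the $(i,j)$ entry of $W^T W$ equals $e^T A^{i+j-2} e$. By Lemma~\ref{Le1}, $e^T A^\ell e$ is even whenever $\ell \geq 1$, so every entry of $W^T W$ is even except possibly the $(1,1)$ entry, which equals $n$. Consequently, every column $A^j e$ with $j \geq 1$ satisfies $W^T(A^j e) \equiv 0 \pmod 2$; when $n$ is even the first column $e$ also lies in the solution space, but when $n$ is odd the first entry of $W^T e$ equals $n$, which is odd, so $e$ must be excluded.

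Next, I would invoke rank--nullity over $\mathbf{F}_2$: the hypothesis $\mathrm{rank}_2(W) = \lceil n/2 \rceil$ forces the solution space of $W^T x \equiv 0 \pmod 2$ to have dimension exactly $\lfloor n/2 \rfloor$. Hence, any $\lfloor n/2 \rfloor$ vectors lying in this space and independent over $\mathbf{F}_2$ automatically form a basis. Since the columns singled out in the lemma do lie in the solution space by the previous paragraph, the only remaining point is to verify that $\lfloor n/2 \rfloor$ independent columns of the specified form actually exist, so the statement is not vacuous.

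For $n$ even, all $n$ columns of $W$ lie in the solution space, and since the column space of $W$ already has dimension $n/2$ over $\mathbf{F}_2$, it coincides with the solution space; any $n/2$ independent columns of $W$ then form a basis. For $n$ odd, the $n-1$ columns $Ae, A^2 e, \ldots, A^{n-1} e$ all lie in the solution space. Deleting the first column of $W$ decreases the $\mathbf{F}_2$-rank by at most one, giving rank at least $(n+1)/2 - 1 = (n-1)/2$; combined with the upper bound given by the dimension of the solution space, the rank is exactly $(n-1)/2$, so any $(n-1)/2$ independent columns from $Ae,\ldots,A^{n-1}e$ again form a basis.

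There is no real obstacle: once the identity $(W^T W)_{ij} = e^T A^{i+j-2} e$ is combined with Lemma~\ref{Le1}, everything else is bookkeeping in linear algebra over $\mathbf{F}_2$. The only place one must be careful is the parity split at $i=j=1$, which is precisely what forces the exclusion of the first column when $n$ is odd.
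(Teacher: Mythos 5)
Your proposal is correct and follows essentially the same route as the paper: computing $(W^TW)_{ij}=e^TA^{i+j-2}e$, applying Lemma~\ref{Le1} with the parity split at the $(1,1)$ entry, and then using rank--nullity over $\mathbf{F}_2$ together with the bound $\mathrm{rank}_2(\hat W)\ge \mathrm{rank}_2(W)-1$ in the odd case. No substantive differences.
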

\begin{proof}We distinguish the following two cases.\\
 \textbf{Case 1}. $n$ is even. Let $W^TW=(w_{ij})_{n\times n}$, where $w_{ij}=e^TA^{i+j-2}e$. It follows from Lemma \ref{Le1} and the fact $n$ is even that $W^TW\equiv ~0~(\rm mod~2)$. Notice that the dimension of the the solution space of $W^Tx\equiv 0~(\rm mod~2)$ is $n-{\rm rank}_2(W)=\frac{n}{2}$. Using the assumption ${\rm rank}_2(W)=\frac{n}{2}$ again, we know that any $\frac{n}{2}$ independent column vectors of $W$ forms a set of fundamental solutions to $W^Tx\equiv 0~(\rm mod~2)$.

 \noindent
\textbf{ Case 2}. $n$ is odd. Let $\hat{W}$ be the matrix obtained from $W$ by deleting its first column. Similar to Case 1, we have $W^T\hat{W}\equiv 0~(\rm mod~2)$. Note the dimension of the solution space of $W^Tx\equiv 0~(\rm mod~2)$ is $n-{\rm rank}_2(W)=n-\frac{n+1}{2}=\frac{n-1}{2}$. Moreover, we have ${\rm rank}_2(\hat{W})\geq {\rm rank_2(W})-1=\frac{n-1}{2}$. Therefore, any $\frac{n-1}{2}$ columns from $\hat{W}$, or equivalently, from $W$ (except the first column), forms a set of fundamental solutions to $W^Tx\equiv 0~(\rm mod~2)$.

Combing Cases 1 and 2, the proof is complete.

\end{proof}

\begin{them}[Sach's coefficients Theorem~\cite{CDS}] \label{SACH} Let $P_G(x)=x^n+c_1x^{n-1}+\cdots+c_{n-1}x+c_n$ be the characteristic polynomial of graph $G$. Then
$$c_i=\sum_{H\in{\mathcal{H}_i}}(-1)^{p(H)}2^{c(H)},$$
 where $\mathcal{H}_i$ the set of elementary graphs with $i$ vertices in $G$; $p(H)$ is the number of components of $H$ and $c(H)$ is the number of cycles in $H$.
\end{them}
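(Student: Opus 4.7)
The plan is to derive Sach's formula from the principal-minor expansion of the characteristic polynomial, followed by the Leibniz expansion of each principal minor and a combinatorial interpretation of the surviving terms. The starting point is the standard identity
$$\det(xI-A) \;=\; \sum_{k=0}^{n} (-1)^k s_k\, x^{n-k},$$
where $s_k := \sum_{|S|=k} \det(A[S])$ is the sum of all $k\times k$ principal minors of $A$. Matching coefficients with $P_G(x)=x^n+c_1x^{n-1}+\cdots+c_n$ yields $c_i=(-1)^i s_i$, so the task reduces to giving a combinatorial expression for $s_i$.

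For each vertex subset $S$ of cardinality $i$, I would apply Leibniz to obtain
$$\det(A[S]) \;=\; \sum_{\sigma \in \mathrm{Sym}(S)} \mathrm{sgn}(\sigma) \prod_{j\in S} A_{j,\sigma(j)}.$$
A term is nonzero only when $A_{j,\sigma(j)}=1$ for every $j\in S$, i.e.\ every pair $(j,\sigma(j))$ spans an edge of $G$. Since $A$ has zero diagonal, $\sigma$ has no fixed points, so every cycle of $\sigma$ has length at least $2$. A $2$-cycle $(jk)$ of $\sigma$ then corresponds to the edge $\{j,k\}$ of $G$, while an $\ell$-cycle $(j_1 j_2 \cdots j_\ell)$ with $\ell\geq 3$ corresponds (using symmetry of $A$) to an $\ell$-cycle $j_1 j_2 \cdots j_\ell j_1$ in $G$. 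The associated subgraph $H\subseteq G$ with vertex set $S$ is therefore an elementary subgraph on $i$ vertices, and conversely every elementary subgraph on some $S\subseteq V(G)$ arises in this way.

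Next, I would fix an elementary subgraph $H\in\mathcal{H}_i$ and count the permutations $\sigma$ on $V(H)$ that realize it. Each edge component of $H$ determines a unique transposition; each cycle component of length at least $3$ can be traversed in either of two opposite directions, contributing a total factor of $2^{c(H)}$. Moreover, the sign of such a $\sigma$ is $(-1)^{i-p(H)}$, because a permutation of $i$ elements with $p(H)$ disjoint cycles has sign $(-1)^{i-p(H)}$. Putting everything together,
$$c_i \;=\; (-1)^i s_i \;=\; (-1)^i \sum_{H\in\mathcal{H}_i} (-1)^{i-p(H)}\, 2^{c(H)} \;=\; \sum_{H\in\mathcal{H}_i} (-1)^{p(H)}\, 2^{c(H)},$$
which is Sach's formula.

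The only delicate point is the bookkeeping of the factor $2^{c(H)}$: edge components must \emph{not} contribute a factor of $2$, whereas cycle components of length $\geq 3$ must contribute exactly $2$ each. After that, the identity $(-1)^i\cdot(-1)^{i-p(H)}=(-1)^{p(H)}$ disposes of the signs and the proof is essentially routine. Notably, no properties specific to graphs in $\mathcal{F}_n$ or to the walk-matrix machinery developed earlier are needed; the result is a purely algebraic-combinatorial identity about the characteristic polynomial of an arbitrary symmetric $(0,1)$-matrix with zero diagonal.
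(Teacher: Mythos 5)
The paper states this result only as a citation to Cvetkovi\'c--Doob--Sachs and gives no proof of its own, so there is nothing internal to compare against. Your argument is the standard and correct derivation: the identity $\det(xI-A)=\sum_k(-1)^k s_k x^{n-k}$ with $s_k$ the sum of principal $k\times k$ minors, the Leibniz expansion restricted to fixed-point-free permutations (zero diagonal), the bijection between cycle decompositions of surviving permutations and elementary subgraphs with the orientation factor $2^{c(H)}$ for cycle components of length $\geq 3$, and the sign bookkeeping $(-1)^i(-1)^{i-p(H)}=(-1)^{p(H)}$. All steps check out; this is exactly the proof one finds in the cited reference.
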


\begin{lm}\label{beauty} Let $M$ be an integral symmetric matrix. If $M^2\equiv O~(\rm mod~2)$, then $Me\equiv 0~(\rm mod~2)$.
\end{lm}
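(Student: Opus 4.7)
The statement reduces, after working modulo $2$, to showing that a symmetric matrix $\bar M$ over $\mathbb{F}_2$ satisfying $\bar M^2 = 0$ must annihilate the all-ones vector. My plan is to extract this directly from the diagonal entries of $M^2$, since they control both $M^2 \pmod 2$ and $Me \pmod 2$ simultaneously, thanks to the symmetry hypothesis.

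Concretely, I would look at the $i$-th diagonal entry of $M^2$. Writing $M = (m_{ij})$, one has
\[
(M^2)_{ii} = \sum_{j=1}^{n} m_{ij} m_{ji} = \sum_{j=1}^{n} m_{ij}^2,
\]
where the second equality uses the symmetry $m_{ji} = m_{ij}$. Reducing modulo $2$ and using the fact that $m_{ij}^2 \equiv m_{ij} \pmod 2$ for every integer $m_{ij}$, this gives
\[
(M^2)_{ii} \equiv \sum_{j=1}^{n} m_{ij} = (Me)_i \pmod 2.
\]
The hypothesis $M^2 \equiv O \pmod 2$ forces $(M^2)_{ii} \equiv 0 \pmod 2$ for every $i$, so $(Me)_i \equiv 0 \pmod 2$ for every $i$, i.e., $Me \equiv 0 \pmod 2$.

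In this case there is essentially no obstacle: the proof is a one-line observation combining symmetry with the identity $x^2 \equiv x \pmod 2$ on the diagonal. The only thing I would be careful about is to emphasize that it is crucial that $M$ is symmetric (otherwise $(M^2)_{ii} = \sum_j m_{ij} m_{ji}$ need not reduce to $\sum_j m_{ij}$ modulo $2$), which justifies why this lemma is tailored to symmetric matrices such as powers of the adjacency matrix $A$ appearing later in the argument.
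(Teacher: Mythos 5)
Your proof is correct and is essentially identical to the paper's: both compute $(M^2)_{ii}=\sum_j m_{ij}^2\equiv\sum_j m_{ij}=(Me)_i\pmod 2$ using symmetry and $x^2\equiv x\pmod 2$. Nothing to add.
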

\begin{proof} Let $M=(m_{ij})$. Then the $(i,i)$-th entry of $M^2$ is $\sum_{j=1}^{n}m_{ij}^2\equiv \sum_{j=1}^{n}m_{ij}\equiv 0~(\rm mod~2)$, which gives that $Me\equiv 0~(\rm mod~2)$.
\end{proof}

Next, we fix some notations. Set $k=\lceil\frac{n}{2}\rceil$.
  Let $\tilde{W}$ be the matrix defined as follows: if $n$ is even, $\tilde{W}$ consists of the first $k$ columns of $W$, i.e., $\tilde{W}=[e,Ae,\cdots,A^{k-1}e]$ ; if $n$ is odd, $\tilde{W}$ consists of the first $k$ columns of $W$, except the first column, i.e., $\tilde{W}=[Ae,A^2e,\cdots,A^{k-1}e]$.
Let $W_1=[e,A^2e,\cdots,A^{2n-2}e]$. Similarly, $\tilde{W}_1$ is defined as $\tilde{W}_1=[e,A^2e,\cdots,A^{2k-2}]$ if $n$ is even; and $\tilde{W}_1=[A^2e,A^4e,\cdots,A^{2k-2}]$ if $n$ is odd.

\begin{lm}\label{NBA} Using notations above, we have

 (i)~${\rm rank}_2(\tilde{W}_1)={\rm rank}_2(W_1)$; (ii)~${\rm rank}_2(\tilde{W})={\rm rank}_2(W)$.
\end{lm}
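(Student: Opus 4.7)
The plan is to exploit the Cayley--Hamilton theorem over $\mathbb{F}_2$ to show that the $\mathbb{F}_2$-column spans of $W$ and $W_1$ stabilize after only a few initial columns, so that any truncation containing at least $\mathrm{rank}_2(W)$ (resp.\ $\mathrm{rank}_2(W_1)$) columns already realizes the full mod-$2$ rank.

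First I would establish an \emph{invariance} principle. Since $A$ satisfies its own characteristic polynomial of degree $n$ over $\mathbb{F}_2$, we have $A^n e \in \mathrm{span}_{\mathbb{F}_2}\{e, Ae, \ldots, A^{n-1}e\}$, and this span is therefore $A$-invariant. Applying the same observation to $B := A^2$ shows $\mathrm{span}_{\mathbb{F}_2}\{e, A^2 e, \ldots, A^{2n-2}e\}$ is $A^2$-invariant. The immediate consequence, which I will call \emph{monotonic propagation}, is: if $A^m e$ is the first column in the list $e, Ae, A^2 e, \ldots$ that lies in the $\mathbb{F}_2$-span of its predecessors, then induction combined with invariance forces every subsequent column $A^{m+1}e, A^{m+2}e, \ldots$ into that same span. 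Consequently $\mathrm{rank}_2(W) = m$, and the first $\mathrm{rank}_2(W)$ columns of $W$ are $\mathbb{F}_2$-linearly independent. The identical argument works for $W_1$ using $A^2$-invariance.

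Next I would prove the rank bounds $\mathrm{rank}_2(W) \leq k$ and $\mathrm{rank}_2(W_1) \leq k$. The first is Lemma~\ref{H1}. For the second, note that the $(i,j)$-entry of $W_1^T W_1$ is $e^T A^{2(i+j-2)} e$, which Lemma~\ref{Le1} shows is even whenever $i+j \geq 3$; combined with $e^T e = n \equiv 0 \pmod{2}$ for even $n$, this gives $W_1^T W_1 \equiv O \pmod 2$, hence $\mathrm{rank}_2(W_1) \leq n/2 = k$. When $n$ is odd the only possibly odd entry is the $(1,1)$-entry, so removing the first column one gets $\hat W_1^T \hat W_1 \equiv O \pmod{2}$, which yields the analogous bound for $\hat W_1$.

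Finally I would combine both ingredients. For even $n$, both $\tilde W$ and $\tilde W_1$ consist of the first $k$ columns of $W$ and $W_1$ respectively; since $k \geq \mathrm{rank}_2(W)$ and $k \geq \mathrm{rank}_2(W_1)$, the monotonic-propagation step forces $\mathrm{rank}_2(\tilde W) = \mathrm{rank}_2(W)$ and $\mathrm{rank}_2(\tilde W_1) = \mathrm{rank}_2(W_1)$. For odd $n$, I would apply the same truncation argument with $\hat W$ and $\hat W_1$ in place of $W$ and $W_1$: the sequence $Ae, A^2 e, \ldots$ (resp.\ $A^2 e, A^4 e, \ldots$) is still closed under the appropriate action because if $A^m e = \sum_{i<m} c_i A^i e$ with $i \geq 1$, then $A^{m+1}e \in \mathrm{span}\{A^2 e, \ldots, A^m e\} \subseteq \mathrm{span}\{Ae, \ldots, A^{m-1}e\}$, allowing the induction to go through. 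The main obstacle I anticipate is the careful bookkeeping for the odd-$n$ case, where one must verify that the first-column deletion does not spoil the $A$-propagation when the initial index shifts from $0$ to $1$; once that invariance is checked, the argument is uniform.
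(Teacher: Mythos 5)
Your argument is correct for the even case (the only case the paper actually writes out) but takes a genuinely different route from the paper's. The paper proves (i) via Sachs' coefficient theorem: the odd-indexed coefficients of the characteristic polynomial are even, so Cayley--Hamilton mod $2$ expresses $A^ne$ as a combination of the \emph{even} powers $e,A^2e,\dots,A^{n-2}e$, directly placing the last columns of $W_1$ in the span of the first $k$. For (ii) it needs a further trick: the ``half'' polynomial $M=A^{n/2}+c_2A^{(n-2)/2}+\cdots+c_nI$ satisfies $M^2\equiv P_G(A)\equiv 0\ (\mathrm{mod}\ 2)$ by the Frobenius identity, and Lemma~\ref{beauty} then yields $Me\equiv 0$, i.e., $A^{n/2}e$ already lies in the span of the first $k$ columns of $W$. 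You instead derive both parts from the Krylov-stabilization principle (the first $\mathrm{rank}_2$ vectors of a Krylov sequence are independent and all later ones lie in their span) combined with the Gram-matrix bounds $\mathrm{rank}_2(W)\le k$ (Lemma~\ref{H1}) and $\mathrm{rank}_2(W_1)\le k$ (your computation $W_1^TW_1\equiv O\ (\mathrm{mod}\ 2)$ via Lemma~\ref{Le1}). This is sound and arguably cleaner: it bypasses Sachs' theorem and Lemma~\ref{beauty} entirely, at the cost of giving no explicit linear relation certifying where the Krylov sequence stabilizes. One caveat, which applies equally to the paper's own ``similarly for odd $n$'': as literally stated, (ii) cannot hold for odd $n$ when $\mathrm{rank}_2(W)=\lceil n/2\rceil$ (which is exactly the situation for $G\in\mathcal{F}_n$ by Lemma~\ref{PO}), since $\tilde{W}$ then has only $\lfloor n/2\rfloor$ columns. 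What your $\hat{W}$-version actually establishes is $\mathrm{rank}_2(\tilde{W})=\mathrm{rank}_2(\hat{W})=\lfloor n/2\rfloor$, i.e., full column rank of $\tilde{W}$, and that is precisely what Lemma~\ref{MN} and the proof of Lemma~\ref{L1} consume; it would be worth stating that corrected form explicitly rather than leaving it as ``bookkeeping.''
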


\begin{proof} We only prove the case that $n$ is even, the case that $n$ is odd can be proved in a similar way.

(i)  Let $P_G(x)=x^n+c_1x^{n-1}+\cdots+c_{n-1}x+c_n$ be the characteristic polynomial of graph $G$. By Sach's Theorem~\ref{SACH}, $c_i$ is even when $i$ is odd, since the number of cycles
must be larger than or equal to one in an elementary subgraph of $G$ with odd number of vertices.

By Hamilton-Cayley's Theorem, we have
$$A^n+\sum_{i=1}^{n}c_iA^{n-i}\equiv A^n+\sum_{j=1}^{n/2}c_{2j}A^{n-2j}\equiv 0~(\rm mod~2).$$
 It follows that $A^ne$ is the linear combinations of
$e,A^2e,\cdots,A^{n-2}e$. Thus, $A^{n+m}e$ is the linear combinations of
$e,A^2e,\cdots,A^{n-2}e$ for any $m\geq 1$. That is, the last $k$ columns of $W_1$ can be expressed as linear combinations of
the first $k$ columns of $W_1$. So (i) follows.

(ii) By (i), we have Let $A^{n}+c_2A^{n-2}+\cdots+c_{n-2}A^2+c_nI=0$. Let $M=A^{n/2}+c_2A^{(n-2)/2}+\cdots+c_{n-2}A+c_nI$. Then we have
\begin{eqnarray*}M^2&\equiv&(A^{n/2}+c_2A^{(n-2)/2}+\cdots+c_{n-2}A+c_nI)^2\\
&\equiv &A^{n}+c_2^2A^{n-2}+\cdots+c_{n-2}^2A^2+c_n^2I\\
&\equiv &A^{n}+c_2A^{n-2}+\cdots+c_{n-2}A^2+c_nI\\
&\equiv& 0~(\rm mod~2).
\end{eqnarray*}
Then, by Lemma~\ref{beauty}, we have $Me=A^{n/2}e+c_2A^{(n-2)/2}e+\cdots+c_{n-2}Ae+c_ne\equiv 0~(\rm mod~2)$.
That is, $A^{n/2}e$ can be expressed as the linear combinations of the first $k$ columns of $W$, and the same is true for $A^{n/2+m}e$, for any $m\geq 0$. That is, any column of $W$ can be expressed as linear combinations of
the first $k$ columns of $W$. So (ii) follows.

\end{proof}

\begin{lm}\label{core}Let $G\in{\mathcal{F}_n}$.  Then we have ${\rm rank}_2(\frac{W^T\tilde{W}_1}{2})=k$ if $n$ is even; and ${\rm rank}_2(\frac{W^T\tilde{W}_1}{2})=k-1$ if $n$ is odd, where $k=\lceil n/2\rceil$.
\end{lm}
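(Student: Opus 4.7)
The plan is to observe that $\tilde{W}_1$ is simply a column-selection of $W$, so $W^T\tilde{W}_1$ is a column-selection of the Gram matrix $W^TW$; then to exploit that, after dividing by $2$, this Gram matrix is invertible over $\mathbb{F}_2$. The arithmetic input is Lemma~\ref{PO}: $\det W=\pm 2^{\lfloor n/2\rfloor}b$ for an odd squarefree $b$, so the odd part of $(\det W)^2$ equals $b^2$.

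For $n$ even, I would first note that all entries $(W^TW)_{ij}=e^TA^{i+j-2}e$ are even: Lemma~\ref{Le1} handles $i+j\geq 3$, while $(W^TW)_{11}=n$ is even by assumption. Setting $G:=W^TW/2$, one has $\det G=(\det W)^2/2^n=b^2$, which is odd, so $G$ is invertible over $\mathbb{F}_2$. Writing $\tilde{W}_1=WE$ with $E\in\{0,1\}^{n\times k}$ the selection matrix picking the columns $1,3,\ldots,n-1$ of $W$ (its $j$-th column is the standard basis vector $e_{2j-1}$, so $E$ has rank $k$), one gets $W^T\tilde{W}_1=2GE$, hence $W^T\tilde{W}_1/2=GE$, whose rank over $\mathbb{F}_2$ equals $\mathrm{rank}(E)=k$ since $G$ is $\mathbb{F}_2$-invertible.

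For $n$ odd the entry $(W^TW)_{11}=n$ is odd, so $W^TW/2$ is not integral and the above argument does not apply verbatim. My plan is to delete the first column: let $\hat{W}$ be $W$ with the first column removed and set $Z:=\hat{W}^T\hat{W}/2$, an integer matrix of size $(n-1)\times(n-1)$ (all entries $e^TA^{i+j}e$ with $i,j\geq 1$ are even by Lemma~\ref{Le1}). The crux is to show $\det Z$ is odd. I would do this via the Schur-complement formula applied to
\[
W^TW=\begin{pmatrix} n & 2y^T \\ 2y & 2Z\end{pmatrix}\qquad\text{for some }y\in\mathbb{Z}^{n-1},
\]
yielding $\det(W^TW)=2^{n-1}\bigl(n\det Z-2\,y^T\mathrm{adj}(Z)\,y\bigr)$. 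Comparing with $(\det W)^2=2^{n-1}b^2$ gives the integer identity $n\det Z-2\,y^T\mathrm{adj}(Z)\,y=b^2$, and reducing modulo $2$ (with $n,b$ both odd) forces $\det Z\equiv 1\pmod 2$. Then $Z$ is $\mathbb{F}_2$-invertible, and writing $\tilde{W}_1=\hat{W}E'$ for the rank-$(k-1)$ selection matrix $E'\in\{0,1\}^{(n-1)\times(k-1)}$ picking columns $2,4,\ldots,n-1$ of $\hat{W}$ gives $\hat{W}^T\tilde{W}_1/2=ZE'$, of rank $k-1$ over $\mathbb{F}_2$. Since $\hat{W}^T\tilde{W}_1/2$ is the submatrix of $W^T\tilde{W}_1/2$ obtained by deleting the first row, the full matrix has rank at least $k-1$; the matching upper bound is automatic from the column count.

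The hard part will be the odd-$n$ case: the lone odd entry $n$ at position $(1,1)$ of $W^TW$ blocks the clean division-by-$2$ that makes the even case almost automatic, and the Schur-complement identity above is precisely the extra work needed to recover that $\det Z$ remains odd. Once that is in hand, both cases reduce to the same template --- a full-rank $(0,1)$ selection matrix composed with a matrix invertible modulo~$2$.
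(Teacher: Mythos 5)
Your argument is correct, and the even case is essentially the paper's proof: both of you pass to $\frac{W^TW}{2}$, note that its determinant equals the odd number $b^2$ because $\det(W)=\pm 2^{n/2}b$ with $b$ odd, and conclude that the column block $\frac{W^T\tilde{W}_1}{2}$ has full column rank over $\mathbf{F}_2$. The odd case is where you genuinely diverge. The paper keeps all $n$ columns but doubles the offending one: it forms the square matrix $[2e,\tilde{W}_1,\tilde{W}_2]$, observes that $\frac{1}{2}W^T[2e,\tilde{W}_1,\tilde{W}_2]$ is integral with determinant $\pm b^2$ (since $\det(W^T[2e,\tilde{W}_1,\tilde{W}_2])=\pm2\det(W)^2=\pm 2^nb^2$), and reads off the independence of the columns of $\frac{W^T\tilde{W}_1}{2}$ directly from the invertibility of this $n\times n$ matrix mod $2$. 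You instead delete the column $e$, work with the $(n-1)\times(n-1)$ Gram matrix $Z=\hat{W}^T\hat{W}/2$ of the truncated matrix, and recover the parity of $\det Z$ from the block identity $\det(W^TW)=2^{n-1}\bigl(n\det Z-2y^T\mathrm{adj}(Z)y\bigr)$, so that $n\det Z\equiv b^2\equiv 1\pmod 2$. Your use of the Schur complement is legitimate here because $2Z=\hat{W}^T\hat{W}$ is positive definite ($W$ being nonsingular), and in any case the displayed formula holds as a polynomial identity in the entries; the final step of restoring the deleted row to pass from $\mathrm{rank}_2(ZE')=k-1$ to $\mathrm{rank}_2(\frac{W^T\tilde{W}_1}{2})=k-1$ is also correct. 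The paper's column-doubling trick is shorter and avoids the adjugate computation entirely; your version has the modest advantage of staying with honest Gram matrices throughout and of making explicit how the single odd entry $(W^TW)_{11}=n$ is neutralized.
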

\begin{proof} We distinguish the following two cases:

(i) $n$ is even. Write $W=[\tilde{W}_1,\tilde{W}_2]P$, where $P$ is a permutation matrix and $\tilde{W}_2=[Ae,A^3e,\cdots,A^{n-1}e]$. First we show that ${\rm rank}_2(\frac{W^TW}{2})=n$.
Actually, notice that $G\in{\mathcal{F}_n}$, we have $\det(W)=\pm 2^{n/2}b$, where $b$ is an odd integer.
It follows that $\det(W^TW)= 2^nb^2$, i.e., $\det(\frac{W^TW}{2})=b^2$. Note that $b$ is odd, the assertion follows immediately. Now we have $\frac{W^TW}{2}=[\frac{W^T\tilde{W}_1}{2},\frac{W^T\tilde{W}_2}{2}]P$. It follows that the column vectors of the matrix $\frac{W^T\tilde{W}_1}{2}$ are linearly independent (since $\frac{W^TW}{2}$ has full rank), over $\textbf{F}_2$.

(ii) $n$ is odd. Construct a new matrix $\hat{W}=[2e,\tilde{W}_1,\tilde{W}_2]$. Notice that $\frac{W^T\hat{W}}{2}$ is now always an integral matrix. Since $\det(W)=2^{(n-1)/2}b$ $(b$ is odd), we have $\det(W^T\hat{W})=\det(W)\det(\hat{W})=\pm 2\det^2(W)=2^{n}b^2$ (since $\det(\hat{W})=\pm 2\det(W)$). It follows that $\frac{W^T\hat{W}}{2}=[W^Te,\frac{W^T\tilde{W}_1}{2},\frac{W^T\tilde{W}_2}{2}]$ has full rank $n$.
Therefore, ${\rm rank}_2(\frac{W^T\tilde{W}_1}{2})$ equals the number of columns of $\tilde{W}_1$, which is $k-1$ when $n$ is odd.

Combining Cases (i) and (ii), the lemma is true. The proof is complete.
\end{proof}

The following lemma lies at the heart of the proof of Theorem~\ref{Main}.

\begin{lm}\label{L1} Let $G\in{\mathcal{F}_n}$. Let $Q\in{\mathcal{Q}_G}$ be a rational orthogonal matrix with level $\ell$, then $2\not|\ell$.
\end{lm}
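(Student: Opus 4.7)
The plan is a proof by contradiction. Assume $2 \mid \ell$. By Lemma~\ref{NB} there exists a $(0,1)$-vector $u \not\equiv 0 \pmod{2}$ satisfying $W^{T} u \equiv 0 \pmod 2$ together with
\begin{equation*}
u^{T} A^{k} u \equiv 0 \pmod{4} \quad \text{for } k = 0,1,\ldots,n-1. \tag{$\ast$}
\end{equation*}
The goal is to derive $u \equiv 0 \pmod 2$ from these two conditions, contradicting $u \not\equiv 0 \pmod 2$.

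First I would parametrize $u$ modulo $2$. Because $G \in \mathcal{F}_n$, Lemma~\ref{PO} gives $\mathrm{rank}_{2}(W) = \lceil n/2 \rceil$, so the $\mathbb{F}_{2}$-solution space of $W^{T} x \equiv 0 \pmod 2$ has dimension $\lfloor n/2 \rfloor$. By Lemma~\ref{MN} together with Lemma~\ref{NBA}, the columns of $\tilde{W}$ furnish a basis of this nullspace, and hence
\begin{equation*}
u \equiv \tilde{W} y \pmod 2 \qquad \text{for some } y \in \{0,1\}^{\lfloor n/2 \rfloor}.
\end{equation*}
Lifting to integers, $u = \tilde{W} y + 2 v$ for some integer vector $v$. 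Substituting into $u^{T} A^{k} u$ and using the symmetry of $A^{k}$, all cross terms are multiples of $4$, so
\begin{equation*}
u^{T} A^{k} u \equiv y^{T} \bigl(\tilde{W}^{T} A^{k} \tilde{W}\bigr) y \pmod 4.
\end{equation*}

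Next I would observe that every entry of $\tilde{W}^{T} A^{k} \tilde{W}$ is of the form $e^{T} A^{m} e$ with $m \geq 1$ (even by Lemma~\ref{Le1}); the only possible exception $m = 0$ arises solely when $n$ is even, in which case $e^{T} e = n$ is also even. Writing $\tilde{W}^{T} A^{k} \tilde{W} = 2 M_{k}$ with $M_{k}$ an integer symmetric matrix, $(\ast)$ becomes $y^{T} M_{k} y \equiv 0 \pmod 2$. Because $y$ is $(0,1)$-valued (so $y_{i}^{2} = y_{i}$) and the off-diagonal contributions occur in symmetric pairs, this collapses to the linear condition
\begin{equation*}
\sum_{i} y_{i} (M_{k})_{ii} \equiv 0 \pmod{2}, \qquad k = 0, 1, \ldots, n-1.
\end{equation*}
The key identification is that stacking the diagonals $(M_{k})_{ii}$ into a matrix (rows indexed by $k$, columns by $i$) produces exactly $\frac{W^{T} \tilde{W}_{1}}{2}$: comparing the formula $(M_{k})_{ii} = \frac{1}{2}\, e^{T} A^{2i + k - 2} e$ (for $n$ even) or $\frac{1}{2}\, e^{T} A^{2i + k} e$ (for $n$ odd) against the $(k{+}1,i)$-entry of $\frac{W^{T} \tilde{W}_{1}}{2}$ using the definition of $\tilde{W}_{1}$ gives equality in both parity cases.

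Finally, Lemma~\ref{core} says exactly that $\frac{W^{T} \tilde{W}_{1}}{2}$ has rank $\lceil n/2 \rceil$ (resp.\ $\lceil n/2 \rceil - 1$) modulo $2$ when $n$ is even (resp.\ odd), which in both cases equals the number of columns—i.e.\ full column rank. Hence the linear system forces $y \equiv 0 \pmod 2$, and since $y \in \{0,1\}^{\lfloor n/2 \rfloor}$ we conclude $y = 0$, whence $u \equiv \tilde{W} y \equiv 0 \pmod 2$, the desired contradiction. The main obstacle—and precisely what Lemma~\ref{core} was engineered to handle—is bridging the quadratic mod-$4$ constraints on $u$ to a linear mod-$2$ system whose coefficient matrix is already known to have full column rank; the pleasant surprise is that the coefficient matrix extracted from the diagonals of $\tilde{W}^{T} A^{k} \tilde{W}$ is literally $\frac{W^{T} \tilde{W}_{1}}{2}$, so Lemma~\ref{core} plugs in exactly.
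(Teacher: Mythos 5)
Your proposal is correct and follows essentially the same route as the paper's own proof: reduce $u$ modulo $2$ to a combination $\tilde{W}v$ of the fundamental solutions, push the mod-$4$ quadratic conditions down to the linear system $\frac{W^{T}\tilde{W}_{1}}{2}v\equiv 0 \pmod 2$ via the evenness of $\tilde{W}^{T}A^{l}\tilde{W}$, and invoke Lemma~\ref{core} to force $v\equiv 0$. The only cosmetic difference is that you take the coefficient vector to be $(0,1)$-valued to get $y_i^2=y_i$, whereas the paper uses that $cv_j^2\equiv cv_j\pmod 4$ for even $c$ and arbitrary integers $v_j$; both are valid.
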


\begin{proof} We prove the lemma by contradiction. Suppose on the contrary, $2|\ell$. It follows from Lemma~\ref{NB} that
there exists a vector $u$ such that Eq. (\ref{EE1}) holds. Note that $u$ is a solution to the system of
linear equations $W^Tx\equiv 0~(\rm mod~2)$.
 Since $G\in{\mathcal{F}_n}$, it follows from Lemma~\ref{PO} that ${\rm rank}_2(W)=\lceil\frac{n}{2}\rceil$.
 According to Lemmas~\ref{MN} and \ref{NBA}, we can assume that $\{A^{i_{1}}e,A^{i_{2}}e,\cdots,A^{i_{k}}\}$ is a set of fundamental solutions to $W^Tx\equiv 0~(\rm mod~2)$, where $k:=\lfloor n/2\rfloor$, and $i_1=0,i_2=1,\cdots,i_k=n/2-1$ if $n$ is even and $i_1=1,i_2=2,\cdots,i_k=(n-1)/2$ if $n$ is odd.

Write $\tilde{W}=[A^{i_1}e,A^{i_2}e,\cdots,A^{i_k}e]$. Then $u$ can be written as the linear combinations of the column vectors of $\tilde{W}$, i.e., there is a vector $v\not \equiv 0~(\rm mod~2)$ such that $u\equiv\tilde{W}v~(\rm mod~2)$. So we have $u=\tilde{W}v+2\beta$ for some integral vector $\beta$. It follows that 
\begin{eqnarray*}u^TA^lu&=&(\tilde{W}v+2\beta)^TA^l(\tilde{W}v+2\beta)\\
&=&v^T\tilde{W}^TA^l\tilde{W}v+2v^T\tilde{W}^TA^l\beta+2\beta^TA^l\tilde{W}v+4\beta^TA^l\beta\\
&=&v^T\tilde{W}^TA^l\tilde{W}v+4v^T\tilde{W}^TA^l\beta+4\beta^TA^l\beta\\
&\equiv&v^T\tilde{W}^TA^l\tilde{W}v~(\rm mod~4).
\end{eqnarray*}

By Eq.~(\ref{EE1}), we have $v^T\tilde{W}^TA^l\tilde{W}v\equiv 0~(\rm mod~4)$, for $l=0,1,2,\cdots,n-1$.
Notice that
\begin{eqnarray*}
\tilde{W}^TA^l\tilde{W}=\left[\begin{array}{cccc}
e^TA^{2i_1+l}e & e^TA^{i_1+i_2+l}e &\cdots & e^TA^{i_1+i_k+l}e\\
e^TA^{i_1+i_2+l}e & e^TA^{2i_2+l}e &\cdots & e^TA^{i_2+i_k+l}e\\
\vdots&\vdots&\ddots&\vdots\\
e^TA^{i_1+i_k+l}e & e^TA^{i_2+i_k+l}e &\cdots & e^TA^{2i_k+l}e
\end{array}\right]
\end{eqnarray*}

Let $M:=\tilde{W}^TA^l\tilde{W}$. A key observation is that $M$ is always a symmetric matrix with every entry being a multiple of two. Actually, this follows from Lemma~\ref{Le1}. But we have to distinguish two cases: (i) when $n$ is even, Lemma~\ref{Le1} always can be applied except the case that $i_1=l=0$. While in this case, the $(1,1)$-entry of $M$ is $e^Te=n$ which is even; (ii) when $n$ is odd, we have $i_1=1$, thus applying Lemma~\ref{Le1} directly leads to the desired assertion.

Let $v=(v_1,v_2,\cdots,v_k)^T$. Then we have
 $$M_{ij}v_iv_j+M_{ji}v_jv_i=2M_{ij}v_iv_j\equiv 0~(\rm mod~4),$$
 for $i\neq j$, since $M_{ij}$ is even by the above discussions. Therefore, we have
\begin{eqnarray*}
v^T\tilde{W}^TA^l\tilde{W}v&=&\sum_{i,j}M_{ij}v_iv_j\\
&\equiv &(e^TA^{2i_1+l}e)v_1^2+(e^TA^{2i_2+l}e)v_2^2+\cdots+(e^TA^{2i_k+l}e)v_k^2\\
&\equiv& (e^TA^{2i_1+l}e)v_1+(e^TA^{2i_2+l}e)v_2+\cdots+(e^TA^{2i_k+l}e)v_k\\
&\equiv& [e^TA^{2i_1+l}e,e^TA^{2i_2+l}e,\cdots,e^TA^{2i_k+l}e]v\\
&\equiv& 0~~(\rm mod~4),
\end{eqnarray*}
for $l=0,1,\cdots,n-1$. The second congruence equation follows since $(e^TA^{2i_j+l}e)v_j^2\equiv (e^TA^{2i_j+l}e)v_j~(\rm mod~4)$ for every $1\leq j\leq k$.

Let $\tilde{W}'$ be an $n$ by $k$ matrix defined as follows:
\begin{eqnarray*}
\tilde{W}'&:=&\left[\begin{array}{cccc}
e^TA^{2i_1}e & e^TA^{2i_2}e &\cdots & e^TA^{2i_k}e\\
e^TA^{2i_1+1}e & e^TA^{2i_2+1}e &\cdots & e^TA^{2i_k+1}e\\
\vdots&\vdots&\ddots&\vdots\\
e^TA^{2i_1+n-1}e & e^TA^{2i_2+n-1}e &\cdots & e^TA^{2i_k+n-1}e
\end{array}\right]\\
&=& \left[\begin{array}{c}
e^T\\
e^TA\\
\vdots\\
e^TA^{n-1}
\end{array}\right]
\left[\begin{array}{cccc}
A^{2i_1}e & A^{2i_2}e & \cdots &A^{2i_k}e
\end{array}\right]\\
&=& W^T\left[\begin{array}{cccc}
A^{2i_1}e & A^{2i_2}e & \cdots &A^{2i_k}e
\end{array}\right]\\
&=&W^T\tilde{W}_1,
 \end{eqnarray*}
 where $\tilde{W}_1=[A^{2i_1}e ,A^{2i_2}e, \cdots,A^{2i_k}e]$.

Thus, we have $W^T\tilde{W}_1v\equiv 0~(\rm mod~4)$. Notice that $\frac{W^T\tilde{W}_1}{2}$ is always an integral matrix according to Lemma~\ref{Le1} and the definition of $\tilde{W}_1$. It follows that
 $$\frac{W^T\tilde{W}_1}{2}v\equiv 0~(\rm mod~2).$$

However, by Lemma~\ref{core}, ${\rm rank}_2(\frac{W^T\tilde{W}_1}{2})=k$ and hence, $\frac{W^T\tilde{W}_1}{2}$ has full column rank. It follows that $v\equiv 0~(\rm mod~2)$; a contradiction. This completes the proof.

\end{proof}

Now, we are ready to present the proof of Theorem \ref{Main}.

\begin{proof} Let $G\in{\mathcal{F}_n}$. Let $Q\in{\mathcal{Q}_G}$ with level $\ell$. Then by Theorem \ref{Old}, we have $p\not|\ell$ for any odd prime $p$. By Lemma~\ref{L1}, we have $2\not|\ell$. It follows that $\ell=1$ and hence, $Q$ is a permutation matrix. By Theorem \ref{XXX}, $G$ is DGS. The proof is complete.
\end{proof}

\section{An extension beyond Theorem 1.1}

In the previous section, we have shown that graphs with $\frac{\det{W}}{2^{\lfloor\frac{n}{2}\rfloor}}$ being square-free is always DGS. Notice graphs with above property has the following SNF: $$S=diag(\underbrace{1,1,\cdots,1}_{\lceil\frac{n}{2}\rceil},\underbrace{2,2,\cdots,2b}_{\lfloor\frac{n}{2}\rfloor}),$$ where $b$ is an odd square-free integer. A natural question is: Can we enlarge the family of graph $\mathcal{F}_n$?

Generally, we cannot expect an affirmative answer to this question if we allow $b$ is not square-free. In \cite{WWW}, the author have given an example of non-DGS graph of order 12 with $\det(W)=2^6\times 3^2\times 157\times1361\times 2237$, which shows Theorem 1.1 is best possible in the sense that we cannot guarantee that $G$ is DGS if $\frac{\det{W}}{2^{\lfloor\frac{n}{2}\rfloor}}$ has prime divisor with exponent larger than one.

However, based on the proof in Lemma~\ref{L1}, we are able to give a method to determine DGS-property for graphs
that are not in $\mathcal{F}_n$. Next, we try to give a method for determine the DGS-property for graphs whose walk-matrices have the following SNF:
\begin{equation}\label{SNF}
diag(\underbrace{1,1,\cdots,1}_{\lceil\frac{n}{2}\rceil},\underbrace{2^{l_1},2^{l_2},\cdots,2^{l_t}b}_{\lfloor\frac{n}{2}\rfloor})
\end{equation}

\begin{lm}\label{LK1} Let $G\in{\mathcal{G}_n}$. Suppose that ${\rm rank}_2(W)=\lceil\frac{n}{2}\rceil$ and the SNF of $W$ is given as in Eq. (\ref{SNF}), where $b$ is a square-free integer. Let $Q\in{\mathcal{Q}_G}$ be a rational orthogonal matrix with level $\ell$. Let $W_1:=[e,A^2e,A^4e,\cdots,A^{2n-2}e]$.
 If \begin{equation}
 \{x|\frac{W^TW_1}{2}x\equiv~0~({\rm mod}~2)\}\subset \{x|Wx\equiv 0~({\rm mod}~2)\},
 \end{equation}
then $2\not|\ell$.
\end{lm}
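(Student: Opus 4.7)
The plan is to mirror the contradiction argument of Lemma~\ref{L1} verbatim up to the step $\frac{W^T\tilde W_1}{2}v\equiv 0~({\rm mod}~2)$, and to replace the $\mathcal F_n$-specific rank argument at the end by an application of the new hypothesis. Suppose, for a contradiction, that $2\mid\ell$. Lemma~\ref{NB} supplies a $(0,1)$-vector $u\not\equiv 0~({\rm mod}~2)$ with $u^TA^ku\equiv 0~({\rm mod}~4)$ for $0\le k\le n-1$ and $W^Tu\equiv 0~({\rm mod}~2)$. Since ${\rm rank}_2(W)=\lceil n/2\rceil$ by hypothesis, Lemmas~\ref{MN} and~\ref{NBA} let me take the same indices $i_1,\ldots,i_k$ (with $k:=\lfloor n/2\rfloor$) as in the proof of Lemma~\ref{L1}, so that the columns of $\tilde W:=[A^{i_1}e,\ldots,A^{i_k}e]$ form a basis modulo 2 of the null space of $W^T$. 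Writing $u=\tilde Wv+2\beta$ for some integral $\beta$ and some $v\in\textbf F_2^k$, the condition $u\not\equiv 0~({\rm mod}~2)$ forces $v\not\equiv 0~({\rm mod}~2)$.

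The second step is to recycle the mod-4 computation from Lemma~\ref{L1}. Expanding $u^TA^lu$ modulo 4, Lemma~\ref{Le1} implies that every entry of $\tilde W^TA^l\tilde W$ is even (with $e^Te=n$ handling the sole exceptional $(1,1)$-entry when $n$ is even and $i_1=l=0$), so every off-diagonal cross term vanishes modulo 4, and the diagonal terms simplify via $v_j^2\equiv v_j~({\rm mod}~2)$. The outcome, unchanged from Lemma~\ref{L1}, is
$$W^T\tilde W_1\,v\equiv 0~({\rm mod}~4),\qquad\mbox{equivalently}\qquad\frac{W^T\tilde W_1}{2}\,v\equiv 0~({\rm mod}~2),$$
where $\tilde W_1=[A^{2i_1}e,\ldots,A^{2i_k}e]$ is a $k$-column submatrix of $W_1$.

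Finally, I would bridge from $\tilde W_1$ to the full $W_1$ by a single embedding. Define $\hat v\in\textbf F_2^n$ by $\hat v_{i_j+1}:=v_j$ for $1\le j\le k$, and $\hat v_m:=0$ at every other coordinate. Because the $m$-th column of $W_1$ is $A^{2(m-1)}e$ while the $m$-th column of $W$ is $A^{m-1}e$, this single $\hat v$ satisfies simultaneously $W_1\hat v=\tilde W_1 v$ and $W\hat v=\tilde Wv$. Hence $\frac{W^TW_1}{2}\hat v=\frac{W^T\tilde W_1}{2}v\equiv 0~({\rm mod}~2)$, the hypothesis of the lemma then yields $W\hat v\equiv 0~({\rm mod}~2)$, i.e.\ $\tilde Wv\equiv 0~({\rm mod}~2)$, and the mod-2 linear independence of the columns of $\tilde W$ gives $v\equiv 0~({\rm mod}~2)$, contradicting $v\not\equiv 0~({\rm mod}~2)$. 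The only really new ingredient—and I expect the only delicate point—is the simultaneous identity $W\hat v=\tilde Wv$ and $W_1\hat v=\tilde W_1v$ produced by a single vector $\hat v$; this is what lets a hypothesis phrased in terms of $\frac{W^TW_1}{2}$ and $W$ be applied to the $\tilde W_1$/$\tilde W$ data that Lemma~\ref{L1}'s computation actually produces. Everything else is already done in the proof of Lemma~\ref{L1}.
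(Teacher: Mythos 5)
Your proof is correct and follows essentially the same route as the paper's own (sketched) proof: a contradiction via Lemma~\ref{NB}, a verbatim reuse of the mod-4 computation from Lemma~\ref{L1}, and then the new subset hypothesis in place of the $\mathcal{F}_n$-specific full-column-rank argument. The only cosmetic difference is that you carry out the computation with the $\lfloor n/2\rfloor$-column submatrices $\tilde W,\tilde W_1$ and then zero-pad $v$ to $\hat v$ so that $W\hat v=\tilde Wv$ and $W_1\hat v=\tilde W_1v$, whereas the paper writes $u\equiv Wv~({\rm mod}~2)$ directly with an $n$-dimensional $v$ supported on the fundamental columns, obtaining $\frac{W^TW_1}{2}v\equiv 0~({\rm mod}~2)$ and hence $u\equiv Wv\equiv 0~({\rm mod}~2)$ without an explicit embedding step; the two formulations are equivalent.
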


\begin{proof} The proof is similar to that of Lemma~\ref{L1}. A sketch.
 
  Suppose on the contrary $2|\ell$. It follows from Lemma~\ref{NB} that
there exists a vector $u$ such that Eq. (\ref{EE1}) holds. Note that $u$ is a solution to the system of
linear equations $W^Tx\equiv 0~(\rm mod~2)$. According to Lemma~\ref{MN}, any solution of $W^Tx\equiv 0~(\rm mod~2)$, can be written as linear combinations of the column vectors of $W$ (when $n$ is odd, replace $W$ with $\hat{W}$). It follows that $u$ can be written as the linear combinations of the column vectors of $W$, i.e., there is a vector $v\not \equiv 0~(\rm mod~2)$ such that $u\equiv Wv~(\rm mod~2)$.

Using the similar arguments as in the remaining proof of Lemma~\ref{L1}, we have
 $W^TW_1v\equiv 0~(\rm mod~4)$. Notice that $W^TW_1\equiv 0~(\rm mod~2)$. We have $\frac{W^TW_1}{2}v\equiv 0~(\rm mod~2)$, which implies that $u=Wv\equiv0~(\rm mod~2)$ by the assumption of the lemma; a contradiction.
Therefore $2\not|\ell$. This completes the proof.

\end{proof}

Combining the above lemma and Theorem 2.5, we have the following theorem.
\begin{them} Let $G\in{\mathcal{G}_n}$. Suppose that ${\rm rank}_2(W)=\lceil\frac{n}{2}\rceil$ and the SNF of $W$ is given as in Eq. (\ref{SNF}), where $b$ is a square-free integer. Then $G$ is DGS.
\end{them}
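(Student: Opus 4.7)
The plan is to combine Lemma~\ref{LK1} (which rules out the prime~$2$) with Theorem~\ref{FF} (which rules out the odd primes) and then apply the DGS characterization Theorem~\ref{XXX}. Fix any $Q\in\mathcal{Q}_G$ with level $\ell$; by Theorem~\ref{L0}, $\ell$ divides the last elementary divisor $d_n$ and hence $\det(W)$, so it suffices to rule out every prime $p$ dividing $\det(W)$ as a divisor of~$\ell$.

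First I would read off $\det(W)=\pm 2^{l_1+\cdots+l_t}b$ directly from the assumed Smith normal form. Since $b$ is square-free (and necessarily odd, being coprime to each $2^{l_i}$), any odd prime $p\mid\det(W)$ divides $b$ and satisfies $p^2\nmid\det(W)$. Theorem~\ref{FF} then immediately gives $p\nmid\ell$, so $\ell$ must be a power of~$2$. Next, invoking Lemma~\ref{LK1}, whose hypotheses on ${\rm rank}_2(W)$ and on the SNF of $W$ are exactly those assumed in the theorem, forces $2\nmid\ell$. Combining the two steps yields $\ell=1$, so $Q$ is a permutation matrix, and Theorem~\ref{XXX} then concludes that $G$ is DGS.

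The argument is essentially bookkeeping once Lemma~\ref{LK1} is in hand: the only nontrivial input is the $p=2$ step, which is provided verbatim by the preceding lemma, and the odd-prime portion is a one-line application of Theorem~\ref{FF} made possible by the square-freeness of~$b$. I do not foresee a real obstacle beyond verifying that the two cited results apply under the exact hypotheses stated; all the heavy combinatorial lifting has been done in Lemma~\ref{LK1} (and, behind the scenes, in Lemma~\ref{L1}).
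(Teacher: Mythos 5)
Your overall strategy coincides with the paper's: the theorem is proved by combining Lemma~\ref{LK1} (to exclude the prime $2$) with Theorem~\ref{FF} (to exclude the odd primes via the square-freeness of $b$) and then invoking Theorem~\ref{XXX}; the reduction through Theorem~\ref{L0} and the odd-prime half of your argument are fine. There is, however, a genuine gap in the way you invoke Lemma~\ref{LK1}. That lemma has a \emph{third} hypothesis beyond the rank condition and the shape of the SNF, namely the containment of solution spaces
\[
\Bigl\{x \;\Big|\; \tfrac{W^TW_1}{2}\,x\equiv 0~(\mathrm{mod}~2)\Bigr\}\;\subset\;\bigl\{x \;\big|\; Wx\equiv 0~(\mathrm{mod}~2)\bigr\},
\qquad W_1=[e,A^2e,\dots,A^{2n-2}e],
\]
labelled Eq.~(4) in the paper. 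This containment is precisely what the proof of Lemma~\ref{LK1} uses to pass from $\frac{W^TW_1}{2}v\equiv 0~(\mathrm{mod}~2)$ to $u=Wv\equiv 0~(\mathrm{mod}~2)$ and reach the contradiction; it does not follow from the rank and SNF hypotheses alone. (In the special case of Lemma~\ref{L1} the analogous fact is a nontrivial consequence of Lemma~\ref{core}, whose proof uses $\det(W)=\pm 2^{\lfloor n/2\rfloor}b$, i.e.\ all $2$-exponents equal to $1$; it is not available for the general SNF of Eq.~(\ref{SNF}).) So your claim that the hypotheses of Lemma~\ref{LK1} are exactly those assumed in the theorem is not correct, and without the containment the step $2\nmid\ell$ is unjustified.

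In fairness, this defect is inherited from the paper itself: the theorem's statement omits the containment hypothesis, yet the worked example that follows it explicitly verifies that Eq.~(4) ``also holds'' before concluding that $G$ is DGS, so the containment was clearly intended as a hypothesis and was dropped by oversight. Your write-up should either add that hypothesis to the theorem or supply a proof that it follows from the stated ones; the paper gives no such proof and none is apparent. A small additional quibble: square-freeness of $b$ does not by itself make $b$ odd, but this is harmless for your odd-prime step, since any odd prime dividing $\det(W)=\pm 2^{l_1+\cdots+l_t}b$ divides $b$ exactly once in any case, so Theorem~\ref{FF} still applies.
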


We give an example as an illustration. Let the adjacency matrix of graph $G$ be given as follows:
  
  {\small $$A=\left[\begin{array}{cccccccccccccccccccc}
  {0, 1, 1, 1, 1, 1, 0, 1, 0, 0, 1, 1, 1, 1, 0, 0, 1, 1, 1, 1}\\
   {1, 0, 0, 1, 1, 1, 0, 0, 0, 0, 0, 0, 0, 1, 1, 0, 1, 0, 0, 1}\\
    { 1, 0, 0, 0, 0, 1, 1, 1, 0, 0, 0, 1, 1, 1, 0, 0, 0, 1, 0, 0}\\
     {1, 1, 0, 0, 1, 0, 1, 0, 1, 1, 0, 1, 0, 1, 0, 0, 0, 1, 0, 1}\\
      { 1, 1, 0, 1, 0, 1, 0, 1, 1, 1, 1, 0, 1, 0, 0, 0, 1, 0, 0, 1}\\
       {1, 1, 1, 0, 1, 0, 0, 1, 1, 0, 0, 0, 1, 0, 1, 0, 1, 0, 0, 1}\\
        { 0, 0, 1, 1, 0, 0, 0, 0, 0, 1, 1, 0, 1, 1, 1, 0, 1, 1, 1, 1}\\
         {1, 0, 1, 0, 1, 1, 0, 0, 0, 0, 0, 0, 0, 0, 1, 1, 1, 0, 1, 1}\\
          {0, 0, 0, 1, 1, 1, 0, 0, 0, 0, 1, 1, 1, 0, 0, 0, 0, 1, 1, 0}\\
           {0, 0, 0, 1, 1, 0, 1, 0, 0, 0, 1, 1, 0, 1, 0, 0, 0, 1, 0, 0}\\
            {1, 0, 0, 0, 1, 0, 1, 0, 1, 1, 0, 0, 0, 1, 1, 1, 0, 0, 1, 0}\\
             {1, 0, 1, 1, 0, 0, 0, 0, 1, 1, 0, 0, 1, 1, 0, 0, 0, 0, 0, 0}\\
              {1, 0, 1, 0, 1, 1, 1, 0, 1, 0, 0, 1, 0, 0, 1, 1, 0, 1, 1, 0}\\
               {1, 1, 1, 1, 0, 0, 1, 0, 0, 1, 1, 1, 0, 0, 0, 1, 1, 0, 1, 0}\\
                { 0, 1, 0, 0, 0, 1, 1, 1, 0, 0, 1, 0, 1, 0, 0, 1, 0, 0, 1, 0}\\
                 {0, 0, 0, 0, 0, 0, 0, 1, 0, 0, 1, 0, 1, 1, 1, 0, 0, 1, 1, 0}\\
                  {1, 1, 0, 0, 1, 1, 1, 1, 0, 0, 0, 0, 0, 1, 0, 0, 0, 1, 0, 1}\\
                   {1, 0, 1, 1, 0, 0, 1, 0, 1, 1, 0, 0, 1, 0, 0, 1, 1, 0, 1, 0}\\
                    {1, 0, 0, 0, 0, 0, 1, 1, 1, 0, 1, 0, 1, 1, 1, 1, 0, 1, 0, 1}\\
                     {1, 1, 0, 1, 1, 1, 1, 1, 0, 0, 0, 0, 0, 0, 0, 0, 1, 0, 1, 0}
\end{array}\right].$$}
  It can easily be computed by using Mathematica 5.0 that
$\det(W)=-2^{13}b$, where $b=7\times 11\times 383\times 210857\times
  231734663160530708115251000501057$.
  The SNF of $W$ is as follows:
  $$diag(\underbrace{1,1,\cdots,1}_{10}\underbrace{2,2,\cdots,2,2^2,2^2,2^2}_{10}b).$$
  Moreover, it can be verified that Eq.~(4) also holds. Thus, $G$ is DGS according to Theorem~4.2.

\section{Conclusions and future work}

In this paper, we have given a simple arithmetic criterion for determining whether a graph $G$ is DGS,
in terms of whether the determinant of walk-matrix $\det(W)$ divided by $2^{\lfloor\frac{n}{2}\rfloor}$ is an odd square-free. It is noticed that the definition of $\mathcal{F}_n$ is so simple that the membership of a graph can easily be checked.

We have performed a series of numerical experiments to see how large the family of graphs $\mathcal{F}_n$ is. The graphs are generated randomly independently from the probability space $\mathcal{G}(n,\frac{1}{2})$~(see e.g. \cite{B}). At each time, we generated 1,000 graphs randomly, and counted the number of graphs that are in $\mathcal{F}_n$. Table 1 records one of such experiments (note the results may be varied slightly at each run of the algorithm). The first column is the order $n$ of the graphs generated varying from 10 to 50. The second column records the number of graphs that are belonged to $\mathcal{F}_n$ among the randomly generated 1,000 graphs, and the third column is the corresponding fractions.

 \vspace{3mm}
\renewcommand\arraystretch{0.90}
\begin{center}{\footnotesize
\begin{tabular}{c|c|c}
\multicolumn{3}{l}{\bf Table 1  \quad Fractions of Graphs in $\mathcal{F}_n$}\\
\hline
  $n$ &  \# Graphs in $\mathcal{F}_n$ & The Fractions \\ \hline
  10 &  211 & 0.211 \\ \hline
   15  &  201 & 0.201 \\ \hline
  20  & 213 &0.213 \\ \hline
   25   & 216 &0.216\\ \hline
   30   & 233 &0.233\\ \hline
   35  &  229 &0.229\\ \hline
   40  &  198 &0.198 \\ \hline
   45  &  202  &0.202\\ \hline
   50  &  204 &0.204  \\ \hline

\end{tabular}
}
\end{center}
\vspace{3mm}

We can see from Table 1 that graphs in $\mathcal{F}_n$ has a density around 0.2. It would be an interesting future work to show that this is actually the case.

\end{document}